\newtheorem{theorem}{Theorem}[section]
\newtheorem{lemma}[theorem]{Lemma}
\newtheorem{corollary}[theorem]{Corollary}
\newtheorem{proposition}[theorem]{Proposition}
\title{On Lie nilpotent associative algebras}
\author{Claud W. G. Dias Jr}
\address{Departamento de Matem\'atica, Universidade de Bras\'\i lia, 70910-900 Bras\'\i lia, DF, Brasil}
\email{}
\author{Alexei Krasilnikov}
\address{Departamento de Matem\'atica, Universidade de Bras\'\i lia, 70910-900 Bras\'\i lia, DF, Brasil}
\email{alexei@unb.br}
\date{}
\begin{document}

\maketitle

\begin{abstract}
Let $G$ be a group generated by a set $X$. It is well known and easy to check that
\[
[g_1, g_2, \dots ,g_n] = 1 \mbox{ for all } g_i \in G \qquad \iff \qquad [x_1, x_2, \dots , x_n] =1 \mbox{ for all } x_i \in X.
\]
Let $L$ be a Lie algebra generated by a set $X$. Then it is also well known and easy to check that
\[
[h_1, h_2, \dots , h_n] = 0 \mbox{ for all } h_i \in L \qquad \iff \qquad [x_1, x_2, \dots ,x_n] = 0 \mbox{ for all } x_i \in X.
\]

Now let $A$ be a unital associative algebra generated by a set $X$. Then the assertion similar to the above does not hold: for $n > 2$, it is easy to find an algebra $A$ with a generating set $X$ such that $[x_1, x_2, \dots ,x_n] = 0$ for all $x_i \in X$ but $[a_1, a_2, \dots ,a_n] \ne 0$ for some $a_i \in A$.

However, we prove the following result. Let $R$ be a unital associative and commutative ring such that $\frac{1}{3} \in R$. Let $A$ be a unital associative $R$-algebra generated by a set $X$. Let $X^2 = \{ x_1 x_2 \mid x_i \in X \}$ be the set of all products of $2$ elements of $X$. Then
\[
[a_1, a_2, \dots ,a_n] = 0 \mbox{ for all } a_i \in A \qquad \iff \qquad [y_1, y_2, \dots , y_n] =0 \mbox{ for all } y_i \in X \cup X^2.
\]
Moreover, one can assume that in the commutator $[y_1, y_2, \dots , y_n]$ above $y_1, y_n \in X$.
\end{abstract}

\noindent \textbf{2010 AMS MSC Classification:} 16R10, 16R40

\noindent \textbf{Keywords:} polynomial identities, two-sided ideals, commutators, generators

\section{Introduction}

Let $G$ be a group generated by a set $X$. It is well known (see, for example, \cite[1.11 a), p. 258]{Huppert67} or \cite[Theorem 3.12]{Khukhro97}) and easy to check that
\begin{equation}
\label{assertiongroups}
[g_1, g_2, \dots ,g_n] = 1 \mbox{ for all } g_i \in G \qquad \iff \qquad [x_1, x_2, \dots , x_n] =1 \mbox{ for all } x_i \in X.
\end{equation}
Here a multiplicative commutator $[g_1, g_2, \dots ,g_n]$ is defined recursively by  $[g_1, g_2] = g_1^{-1} g_2^{-1} g_1 g_2$, $[g_1,  \dots , g_{n-1}, g_n] = \bigl[ [g_1,  \dots , g_{n-1}], g_n \bigr]$ $(n>2)$. 

Let $L$ be a Lie algebra generated by a set $X$. Then it is also well known (see, for instance, \cite[Corollary 5.20]{Khukhro97}) and easy to check that
\begin{equation}
\label{assertionLiealgebras}
[h_1, h_2, \dots , h_n] = 0 \mbox{ for all } h_i \in L \qquad \iff \qquad [x_1, x_2, \dots ,x_n] = 0 \mbox{ for all } x_i \in X.
\end{equation}
Here $[h_1, h_2]$ is the Lie product of $h_1$ and $h_2$ and the commutator $[h_1, h_2, \dots , h_n ] $ is defined recursively by $[h_1, \dots , h_{n-1}, h_n ]  = \bigl[ [h_1, \dots , h_{n-1}], h_n \bigr]$ $(n>2)$. 

Now let $A$ be a unital associative algebra generated by a set $X$. Then, for $A$, the assertion similar to (\ref{assertiongroups}) and (\ref{assertionLiealgebras}) does not hold: for $n > 2$, there are algebras $A$ generated by a set $X$ such that  $[x_1, x_2, \dots , x_n] = 0$ for all $x_i \in X$ but  $[a_1, a_2, \dots ,a_n] \ne 0$ for some $a_i \in A$. Indeed, suppose, for example, that $H$ is the $3$-dimensional Lie algebra with a basis $\{ a, b, c \}$ such that $[a,b] = c$, $[a,c] = [b,c] = 0$. Let $A = U(H)$ be the universal envelope of $H$. Then the set $X = \{ a, b \} \subset A$ generates the algebra $A$, we have $[x_1, x_2, x_3] = 0$ for all $x_i \in X$ but, for each $k \ge 1$,
\[
[a, \underbrace{ab, ab, \dots , ab}_{\mbox{$k$ times} } ] = a c^k \ne 0.
\]
Here an additive commutator $[a_1, a_2, \dots ,a_n]$ is defined recursively by  $[a_1, a_2] = a_1 a_2 - a_2 a_1$ and $[a_1,  \dots , a_{n-1}, a_n] = \bigl[ [a_1,  \dots , a_{n-1}], a_n \bigr]$ $(n>2)$. 

However, the following theorem holds. 

\begin{theorem}
\label{verification[a1an]=0}
Let $R$ be an arbitrary unital associative and commutative ring such that $3$ is invertible in $R$ (that is, $\frac{1}{3} \in R$). Let $A$ be a unital associative $R$-algebra generated by a set $X$ and let $n \ge 2$. Then we have
\[
[a_1, a_2, \dots , a_n] =0 \quad \mbox{for all} \quad a_i \in A
\]
if, and only if,
\[
[y_1, y_2, \dots , y_n] = 0 \quad \mbox{for all} \quad y_1, y_n \in X, \  y_2, \dots , y_{n-1} \in X \cup X^2 .
\]
\end{theorem}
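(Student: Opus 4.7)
The implication $(\Rightarrow)$ is immediate. For $(\Leftarrow)$, let $I$ denote the two-sided ideal of $A$ generated by the long commutators $[y_1, \ldots, y_n]$ with $y_1, y_n \in X$ and $y_2, \ldots, y_{n-1} \in X \cup X^2$. The plan is to prove $[a_1, \ldots, a_n] \in I$ for every tuple $(a_1, \ldots, a_n) \in A^n$; under the hypothesis $I = \{0\}$, this yields the conclusion. Multilinearity of the commutator reduces to the case in which each $a_i$ is a monomial in $X$, and I argue by induction on the total degree $d = \sum_i \deg a_i$. The base case $d = n$, where each $a_i \in X$, is immediate from the hypothesis.

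For the inductive step, the degrees are reduced slot by slot. The last slot is handled by the Leibniz identity $[c, uv] = [c, u]v + u[c, v]$, which makes the commutator a derivation in its last argument: expanding $a_n = x_1 \cdots x_k$ writes $[a_1, \ldots, a_n]$ as $\sum_j x_1 \cdots x_{j-1}\,[a_1, \ldots, a_{n-1}, x_j]\, x_{j+1} \cdots x_k$, each inner commutator having strictly smaller total degree. The first slot is handled by the dual identity $[uv, c] = u[v, c] + [u, c]v$ together with iteration through the outer commutators, yielding after expansion
\[
[uv, a_2, \ldots, a_n] = u\,[v, a_2, \ldots, a_n] + [u, a_2, \ldots, a_n]\,v + \Theta,
\]
where $\Theta$ is a linear combination of products $[u, a_S]\,[v, a_{S^c}]$ of two nested commutators, both of length strictly less than $n$, indexed by nonempty proper subsets $S \subsetneq \{2, \ldots, n\}$. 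The first two terms lie in $I$ by induction. Middle slots of degree at least $3$ are handled similarly by writing $a_i = uv$ with $u \in X$; the Leibniz rule on a middle slot produces analogous correction products, and the freedom to leave an $X^2$ element in a middle slot is exactly what will be exploited to absorb them.

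The heart of the proof, and the step I expect to be the main obstacle, is showing that the correction products (the $\Theta$-terms) lie in $I$. The strategy is as follows: expand a long commutator whose middle slot is an element $xy \in X^2$ via the Leibniz rule, and cancel all sub-terms that are either long commutators with all entries in $X$ (zero by hypothesis) or have strictly smaller total degree (in $I$ by induction). What remains is a nontrivial linear combination of correction products that equals zero modulo $I$ plus lower-total-degree terms. Solving this relation expresses a single correction product in terms of a long commutator modulo induction-covered terms, which places $\Theta$ in $I$. A symmetrization combining three variants of this relation collects a scalar factor of $3$ that must be inverted; this is precisely where the hypothesis $\tfrac{1}{3} \in R$ enters. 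The main technical difficulty is the bookkeeping: one must verify that the $X^2$ entries always land in middle slots (never at the endpoints), that all correction products arising from the Leibniz expansions in both the first slot and the middle slots can be absorbed by this mechanism, and that the simultaneous induction on total degree in fact terminates.
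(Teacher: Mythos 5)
Your reduction of the easy direction, the passage to monomials, and the slot-by-slot Leibniz expansion are all sound, and you have correctly located both the obstacle (the correction products $[u,a_S][v,a_{S^c}]$) and the place where $\frac{1}{3}$ enters. But the proposal stops exactly where the real proof begins: the claim that the correction products lie in $I$ is presented as a strategy (``expand, cancel, solve the relation, symmetrize'') rather than as an argument, and you yourself flag the bookkeeping as unverified. This is a genuine gap, for two reasons. First, the correction products have the \emph{same} total degree $d$ as the commutator you are reducing (they involve all of $u$, $v$ and the remaining $a_i$), so your induction on total degree says nothing about them; nor are they of the form $[b_1,\dots,b_n]$, so the hypothesis that the long commutators vanish does not apply to them directly. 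You therefore need a separate, self-contained proof that products of two commutators whose lengths sum to $n+1$ lie in $I$, and that is the hard part of the whole theorem (indeed, over $\mathbb Z$ the statement $[x_1,x_2][x_3,x_4,x_5]\in T^{(4)}$ is \emph{false}, so no purely formal cancellation can do this without inverting $3$). Second, the relation obtained by expanding a commutator with an $X^2$ entry in a middle slot is a sum of \emph{several} correction products together with nested terms such as $\bigl[[c,z_2],[z_1,z_3]\bigr]$; isolating a single product requires first establishing the antisymmetry congruences $[d,z_1][z_2,z_3]+[d,z_2][z_1,z_3]\equiv 0$ and $[c,z_1][z_2,z_3,z_4]+[c,z_2][z_1,z_3,z_4]\equiv 0$ modulo the ideal, then an $S_4$-symmetrization via the Jacobi identity yielding $3\,[c,z_1][z_2,z_3,z_4]\equiv 0$, and finally a check that every correction product produced by the Leibniz expansions (in the first slot, the middle slots, and the last slot) can be rewritten modulo the ideal into one of these canonical shapes. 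None of this is carried out in the proposal.

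For comparison, the paper organizes the same difficulties differently: it inducts on $n$ in steps of two, using $T^{(n)}=A[T^{(n-2)},A,A]A$ (Proposition \ref{A[Tn-2AA]A=Tn}, which already requires $\frac{1}{3}\in R$) to reduce to a general structure theorem (Theorem \ref{generatorsW}) describing generators of the ideal generated by $[U,A,A]$ for an ideal $U$ with generating set $S$; the mechanism you are missing is exactly Lemmas \ref{lemma[dz][zz]=0}, \ref{lemma[cz][zzz]=0}, \ref{sigma[sz][zzz]} and Corollary \ref{corollary3[sz][zzz]}, applied in Section 4. If you wish to keep the induction on total degree you would still have to prove analogues of those lemmas; as written, the proposal is a plausible plan but not a proof.
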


Here $3 = 1 + 1 +1 \in R$ and $X^2 = \{ x_1 x_2 \mid x_i \in X \}$ is the set of all products of $2$ elements of $X$.

\medskip
Recall that an associative algebra $A$ is called \textit{Lie nilpotent} (of class at most $n-1$) if $[a_1, a_2, \dots , a_n] = 0$ for all $a_i \in A$. The study of Lie nilpotent associative rings and algebras was started by Jennings \cite{Jennings47} in 1947. Since then Lie nilpotent associative rings and algebras have been investigated in many papers from various points of view. Recent interest in Lie nilpotent associative algebras has been motivated by the study of the quotients $L_{n-1}/L_{n}$ of the lower central series of the associated Lie algebra of an associative algebra $A$; here $L_n$ is the linear span in $A$ of the set of all commutators $[a_1, a_2, \dots , a_n]$ $(a_i \in A)$. The study of these quotients $L_{n - 1} /L_{n}$ was initiated in 2007 in the pioneering article of Feigin and Shoikhet \cite{FS07} for $A = \mathbb C \langle X \rangle$; further results on this subject can be found, for example, in \cite{AJ10, BB11, BJ10, BEJKL12, CFZ15, Dangovski15, DK15, DK17, DE08, DKM08, EKM09, JO15, Kras13} and in the survey \cite{AE16}. 

\medskip
Let $R$ be a unital associative and commutative ring and let $A$ be a unital associative algebra over $R$ generated by a set $X$. Let $T^{(n)} = T^{(n)} (A)$ $(n \ge 2)$ be the two-sided ideal of $A$ generated by all commutators $[a_1, a_2, \dots , a_n]$ $(a_i \in A)$. Define $T^{(1)} (A) = A$. 

To study the Lie nilpotent quotients $A /T^{(n)}$ of an algebra $A$ one needs a ``good'' generating set for the two-sided ideal $T^{(n)}$ $(n \ge 2)$. Clearly, by definition, the ideal $T^{(n)}$ is generated by the set of all commutators $[a_1, a_2, \dots , a_n]$ $(a_i \in A )$. However, this generating set is very large and inconvenient to use; in particular, this set is infinite even if $X$ is finite. For $n \le 5$, smaller ``good'' generating sets of $T^{(n)}$ have been found and used to study the quotients $A /T^{(n)}$. 

It can be easily seen that the ideal $T^{(2)}$ is generated (as a two-sided ideal in $A$) by the commutators $[x_1, x_2]$ $(x_i \in X)$. The ideal $T^{(3)}$ is generated by the polynomials
\[
[x_1,x_2,x_3], \qquad [x_1,x_2][x_3,x_4] + [x_1,x_3][x_2,x_4] \qquad (x_i \in X)
\]
(\cite{Latyshev63}; see also, for instance, \cite{BEJKL12, GK01, GK99}). If $\frac{1}{3} \in R$ then a similar generating set for $T^{(4)}$ contains the polynomials of 3 types (\cite{Volichenko78}; see also \cite{EKM09, Gordienko07}):
\begin{equation}
\label{polynomial1}
[x_1,x_2,x_3,x_4] \qquad (x_i \in X),
\end{equation}
\begin{equation}
\label{polynomial2}
[x_1,x_2][x_3,x_4,x_5] \qquad (x_i \in X),
\end{equation}
\begin{equation}
\label{polynomial3}
\bigl( [x_1,x_2] [x_3,x_4] + [x_1,x_3] [x_2,x_4] \bigr) [x_5,x_6] \qquad (x_i \in X).
\end{equation}
If $\frac{1}{3} \notin R$ then a set of generators of $T^{(4)}$ consists of the polynomials of types (\ref{polynomial1}), (\ref{polynomial3}) and the polynomials
\begin{align*}
&   [x_1,x_2,x_3] [x_4,x_5,x_6] \qquad (x_i \in X),
\\
& [x_1,x_2,x_3] [x_4,x_5] + [x_1,x_2,x_4][x_3,x_5] \qquad (x_i \in X),
\\
& [x_1,x_2,x_3] [x_4,x_5] + [x_1,x_4,x_3][x_2,x_5] \qquad (x_i \in X)
\end{align*}
(see \cite[Theorem 1.3]{DK15}). In the latter case, in general, the polynomials $[x_1,x_2][x_3,x_4,x_5]$ $(x_i \in X)$ of type (\ref{polynomial2}) do not belong to $T^{(4)}$ but $3 \, [x_1,x_2][x_3,x_4,x_5] \in T^{(4)}$ (see \cite{Kras13}). For arbitrary $R$, a set of generators of $T^{(5)}$ consists of polynomials of $8$ types, see \cite{dCKras13}.

The following theorem provides a simple `small' generating set for the ideal $T^{(n)}$ for arbitrary $n \ge 2$. It is clear that this theorem is equivalent to Theorem \ref{verification[a1an]=0}.

\begin{theorem}
\label{generatorsTn}
Let $R$ be an arbitrary unital associative and commutative ring such that $\frac{1}{3} \in R$. Let $A$ be a unital associative $R$-algebra generated by a set $X$. Then the ideal $T^{(n)}$ $(n \ge 2)$ is generated (as a two-sided ideal in $A$) by the set
\[
\bigl\{ [y_1, y_2, \dots , y_n] \mid y_1, y_n \in X; \  y_2, \dots, , y_{n-1} \in X \cup X^2  \bigr\} .
\]
\end{theorem}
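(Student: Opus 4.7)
The plan is to prove Theorem \ref{generatorsTn} directly by showing $T^{(n)} \subseteq I$, where $I$ denotes the two-sided ideal generated by the displayed ``small'' commutators (the reverse inclusion is trivial). By multilinearity of the additive commutator in each slot, it suffices to prove that $[a_1, \dots, a_n] \in I$ whenever each argument $a_i = x_{i,1} \cdots x_{i, \ell_i}$ is a monomial in $X$. I will induct on the total length $\ell = \sum_i \ell_i$; the base case $\ell = n$ is immediate, since then every $a_i \in X$ and the commutator already belongs to the chosen generating set of $I$.

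The last argument is easy to reduce to a single generator: if $a_n = uv$, the outer Leibniz identity
\[
[Y, uv] = [Y, u]\,v + u\,[Y, v], \qquad Y = [a_1, \dots, a_{n-1}],
\]
expresses $[a_1, \dots, a_{n-1}, uv]$ as a sum of two $n$-fold commutators of strictly smaller total length, both of which lie in $I$ by the inductive hypothesis. Iterating, one may assume $a_n \in X$.

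The core of the proof is the reduction of a middle argument $a_i$ ($2 \le i \le n-1$) of length $\ge 3$ down to length $\le 2$, and analogously the reduction of $a_1$ down to length $1$. In both cases, applying Leibniz inside a nested bracket --- for instance,
\[
[uv, a_2, a_3] = u\,[v, a_2, a_3] + [u, a_2, a_3]\,v + [u, a_2]\,[v, a_3] + [u, a_3]\,[v, a_2]
\]
--- produces not only shorter commutators (controlled by induction) but also \emph{cross terms}, i.e., products of two strictly shorter nested commutators, which are not \emph{a priori} in $I$. Collapsing them is the main obstacle. The strategy is to combine several Leibniz expansions (obtained by redistributing the split $uv$ through different positions and permuting the remaining arguments) so that the cross terms assemble into a Jacobi-type relation. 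Since the Jacobi identity involves three cyclically permuted terms, extracting an isolated commutator from it produces a factor of $3$ on one side; this is precisely why $\frac{1}{3} \in R$ is needed.

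Once these identities are in hand, the induction closes: Leibniz at position $n$ brings $a_n$ into $X$, the symmetrization argument brings each middle $a_i$ into $X \cup X^2$, and the analogous argument for $a_1$ brings it into $X$, at which point the commutator lies in $I$ by hypothesis. The principal obstacle is thus locating the precise symmetrized combination of Leibniz expansions whose cross terms can be rewritten via a Jacobi relation with coefficient $3$; everything else is bookkeeping on the induction on $\ell$.
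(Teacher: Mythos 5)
Your proposal correctly identifies where the difficulty lies, but it does not resolve it: the entire content of the theorem is concentrated in the step you defer to the end (``locating the precise symmetrized combination of Leibniz expansions whose cross terms can be rewritten via a Jacobi relation with coefficient $3$''), and that step is not carried out. The cross terms produced by expanding a middle or first argument, such as $[u,a_2][v,a_3]$ or, more generally, products $[u,a_{i_1},\dots][v,a_{j_1},\dots]$ of two shorter nested commutators, are not $n$-fold commutators of monomials at all, so your induction on the total length $\ell$ of a single $n$-fold commutator gives no handle on them. To control them you need separate statements asserting that certain \emph{products of commutators} lie in the ideal $I$ --- e.g.\ that $[d,z_1][z_2,z_3]+[d,z_2][z_1,z_3]$ and $3\,[c,z_1][z_2,z_3,z_4]$ vanish modulo $I$ under suitable hypotheses, and that $[s,x]$ is central modulo $I$ so that the intervening monomial factors $y_{i+1}\cdots y_{i'-1}$ sitting between the two commutator factors can be moved out of the way. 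None of this is supplied or even precisely formulated in your sketch, and it is not ``bookkeeping'': the paper's remarks show that without $\tfrac13\in R$ the analogous cross term $[x_1,x_2][x_3,x_4,x_5]$ genuinely fails to lie in $T^{(4)}$, so the membership of these products in $I$ is a delicate fact, not a formal consequence of Leibniz and Jacobi.

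For comparison, the paper organizes the argument quite differently: it inducts on $n$ in steps of two, first proving that $T^{(n)}=A[T^{(n-2)},A,A]A$ (Proposition~\ref{A[Tn-2AA]A=Tn}, which already uses $\tfrac13\in R$ and the three-term Jacobi symmetrization you allude to), and then proving a general statement (Theorem~\ref{generatorsW}) describing generators of the ideal generated by $[u,a_1,a_2]$, $u\in U$, in terms of generators of $U$; the cross terms are tamed there by the lemmas establishing the centrality of $[s,x]$ and of $[s,x_1][x_2,x_3]$ modulo $I$ and the antisymmetry relations $[d,z_1][z_2,z_3]+[d,z_2][z_1,z_3]\equiv 0$, $[c,z_1][z_2,z_3,z_4]+[c,z_2][z_1,z_3,z_4]\equiv 0$. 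A direct induction on total monomial length along the lines you propose might be made to work, but you would have to augment it with an auxiliary family of claims about products of commutators modulo $I$, which is precisely the missing machinery.
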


We will prove Theorem \ref{generatorsTn} by induction on $n$: assuming that the theorem is true for $T^{(n-2)}$ we will show that  it is valid for $T^{(n)}$. The cases $n = 1$ and $n = 2$ form the base of the induction. 

The following proposition makes the induction step possible; it has been proved in a different slightly weaker form by Grishin and Pchelintsev \cite[Lemma 2]{GP15}.

\begin{proposition}[cf. \cite{GP15}]
\label{A[Tn-2AA]A=Tn}
Let $R$ be an arbitrary unital associative and commutative ring such that $\frac{1}{3} \in R$. Let $A$ be a unital associative algebra over $R$. Let $n \in \mathbb Z$, $n > 2$. Then the two-sided ideal of $A$ generated by the set $[T^{(n-2)}, A, A]$ coincides with $T^{(n)}$, that is,
\[
A [T^{(n-2)} , A, A]A  = T^{(n)} .
\]
\end{proposition}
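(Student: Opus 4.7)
The inclusion $T^{(n)}\subseteq A[T^{(n-2)},A,A]A$ is immediate: each generator of $T^{(n)}$ has the form $[a_1,\ldots,a_n]=[[a_1,\ldots,a_{n-2}],a_{n-1},a_n]$, which already lies in $[T^{(n-2)},A,A]$ because $[a_1,\ldots,a_{n-2}]\in T^{(n-2)}$. The content of the proposition is therefore the reverse inclusion, which is equivalent to showing that $[t,u,v]\in T^{(n)}$ for every $t\in T^{(n-2)}$ and every $u,v\in A$.

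To prove this, I would fix a typical spanning element $t=p\,c\,q$ of $T^{(n-2)}$, with $p,q\in A$ and $c=[a_1,\ldots,a_{n-2}]$ a single nested commutator of length $n-2$, and expand $[p\,c\,q,u,v]$ by two applications of the Leibniz rule $[xy,z]=[x,z]y+x[y,z]$. The result is a sum of nine summands. One of them, $p[c,u,v]q$, is the ``good'' term: $[c,u,v]$ is a length-$n$ nested commutator, so $p[c,u,v]q\in A\cdot L_n\cdot A\subseteq T^{(n)}$. The remaining eight are ``cross'' terms in which $u$ or $v$ has been differentiated onto $p$ or $q$ rather than onto $c$; they take the three shapes
\[
[[p,u],v]\,c\,q,\qquad [p,u][c,v]\,q,\qquad [p,u]\,c\,[q,v],
\]
together with their companions produced by the full Leibniz expansion. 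After classification, each cross term is a product of elements drawn from $T^{(2)}$, $T^{(3)}$, $T^{(n-2)}$, $T^{(n-1)}$ whose ``commutator weights'' add up to $n+1$, so the whole argument reduces to verifying the product rule
\[
T^{(a)}\,T^{(b)}\subseteq T^{(a+b-1)}
\]
for the pairs $(a,b)\in\{(2,n-1),(3,n-2),(n-2,3),(n-1,2)\}$, together with a double application giving $T^{(2)}\,T^{(n-2)}\,T^{(2)}\subseteq T^{(n)}$.

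The main obstacle is establishing this product rule, and this is precisely where the hypothesis $\tfrac13\in R$ enters. As the excerpt has already recalled, the inclusion $T^{(2)}\,T^{(3)}\subseteq T^{(4)}$ fails when $3$ is not invertible: the polynomial $[x_1,x_2][x_3,x_4,x_5]$ lies in $T^{(4)}$ only after multiplication by $3$, and analogous failures of $T^{(2)}\,T^{(k)}\subseteq T^{(k+1)}$ occur for all $k\ge 3$. To handle this step I would combine the Jacobi identity $[a,b,c]+[b,c,a]+[c,a,b]=0$ with the Leibniz rule to rewrite a product $[x,y]\cdot[z_1,\ldots,z_k]$ as a length-$(k+1)$ nested commutator plus terms that vanish modulo $T^{(k+1)}$; summing over three cyclic permutations of a triple produces a factor of $3$ that is then divided out using $\tfrac13\in R$. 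Once these product inclusions are in hand, each of the eight cross terms is absorbed into $T^{(n)}$, the good term is already in $T^{(n)}$, and the desired containment $[T^{(n-2)},A,A]\subseteq T^{(n)}$ follows, completing the proof.
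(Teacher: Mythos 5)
Your overall strategy (reduce to $[T^{(n-2)},A,A]\subseteq T^{(n)}$, expand by Leibniz, absorb the cross terms) matches the paper's, but the step you reduce everything to is false, and this is a genuine gap. The product rule $T^{(a)}T^{(b)}\subseteq T^{(a+b-1)}$ fails precisely in the cases you need most: the cross terms $[p,u][c,v]q$ and $p[c,u][q,v]$ lie in $T^{(2)}T^{(n-1)}A$ and $AT^{(n-1)}T^{(2)}$, and $T^{(n-1)}T^{(2)}\nsubseteq T^{(n)}$ whenever $n-1$ is even --- this holds even over $\mathbb{Q}$, so no amount of Jacobi-identity manipulation and dividing by $3$ can rescue it. The smallest instance is $n=3$: the single product $[x_1,x_2][x_3,x_4]$ does not lie in $T^{(3)}$ (only the symmetrized combination $[x_1,x_2][x_3,x_4]+[x_1,x_3][x_2,x_4]$ does, which is exactly why that polynomial appears in Latyshev's generating set for $T^{(3)}$); the general failure $T^{(2k)}T^{(2)}\nsubseteq T^{(2k+1)}$ is recorded in Remark 1 at the end of Section 1, citing \cite{GTS11}. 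So the individual cross terms of your nine-term expansion are simply not in $T^{(n)}$, and no term-by-term absorption can work.

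The paper circumvents this with a preliminary normalization that you skip: since $a_1[b_1,\dots,b_{n-2}]a_2=[b_1,\dots,b_{n-2}]a_1a_2+\bigl[[b_1,b_2],b_3,\dots,b_{n-2},a_1\bigr]a_2$, the ideal $T^{(n-2)}$ is already spanned by elements $ua_1$ with $u=[b_1,\dots,b_{n-2}]$ a single nested commutator and \emph{no left multiplier}. The expansion of $[ua_1,a_2,a_3]$ then has only four terms, and the two dangerous ones occur automatically in the symmetric combination $[u,a_2][a_1,a_3]+[u,a_3][a_1,a_2]$, which \emph{does} lie in $T^{(n)}$ (Lemma \ref{lemma[ua][aa]3u[aaa]}\,i), proved via Lemma \ref{lemma[dz][zz]=0} in the quotient $A/T^{(n)}$) even though each summand separately does not. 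The Jacobi/factor-of-$3$ argument you describe is essentially correct, but it is needed only for the remaining term $u[a_1,a_2,a_3]$ (Lemma \ref{lemma[ua][aa]3u[aaa]}\,ii), via Lemma \ref{lemma[cz][zzz]=0} and Corollary \ref{corollary3[sz][zzz]}), not for the $T^{(n-1)}T^{(2)}$-type terms. To repair your proof you must either perform this normalization first, or group your eight cross terms into the symmetrized combinations that actually lie in $T^{(n)}$ rather than treating them one at a time.
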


We will prove Proposition \ref{A[Tn-2AA]A=Tn} in the Preliminaries section. 

\medskip
\noindent
\textbf{Remarks.} 
1. In general, $A[T^{(n-1)}, A] A \ne T^{(n)}$; for details,  see the remarks at the end of this section.

2. In general, Proposition \ref{A[Tn-2AA]A=Tn}  fails if $\frac{1}{3} \notin R$. In particular, it fails over $\mathbb Z$ and over a field of characteristic 3;  for details,  see the remarks at the end of this section.

\medskip
To prove Theorem \ref{generatorsTn} we make use of the following theorem that is also of independent interest.

\begin{theorem}
\label{generatorsW}
Let $R$ be an arbitrary unital associative and commutative ring and let $A$ be a unital associative $R$-algebra generated by a  set $X$. Let $U$ be a two-sided ideal in $A$ generated by a set $S$ and let $W$ be the two-sided ideal in  $A$ generated by the set
\[
\bigl\{ [u, a_1, a_2] \mid u \in U, \ a_1, a_2 \in A \bigr\} .
\]
Then $W$ is generated (as a two-sided ideal in $A$) by the following elements:
\begin{equation}
\label{generator[sxx]}
[ s, x_1, x_2] \qquad (s \in S, \ x_i \in X ),
\end{equation}
\begin{equation}
\label{generators[xxx]}
s [ x_1, x_2, x_3] \qquad (s \in S, \ x_i \in X ),
\end{equation}
\begin{equation}
\label{generator[sx][xxx]}
[ s, x_1] [x_2,x_3,x_4] \qquad (s \in S, \ x_i \in X ),
\end{equation}
\begin{equation}
\label{generator[sx][xx]}
[ s, x_1] [x_2, x_3] + [s, x_2][x_1, x_3] \qquad (s \in S, \ x_i \in X ),
\end{equation}
\begin{equation}
\label{generators[xx][xx]}
s \bigl( [x_1, x_2] [x_3, x_4] + [x_1, x_3] [x_2, x_4] \bigr)  \qquad (s \in S, \ x_i \in X ).
\end{equation}
\end{theorem}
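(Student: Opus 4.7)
Let $V$ be the two-sided ideal of $A$ generated by the elements (\ref{generator[sxx]})--(\ref{generators[xx][xx]}); the plan is to prove $V = W$.

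For the inclusion $V \subseteq W$, we must exhibit each generator in $W$. Generator (\ref{generator[sxx]}) is immediate since $s \in U$. For the remaining four, the key observation is that $sy$ and $[s, y]$ lie in $U$ for every $y \in A$, so any triple commutator starting with such a factor belongs to $W$. Applying the Leibniz rule to $[sx_1, x_2, x_3]$ gives
\[
[sx_1, x_2, x_3] = s[x_1, x_2, x_3] + [s, x_2, x_3]\, x_1 + [s, x_2][x_1, x_3] + [s, x_3][x_1, x_2],
\]
and adding the analogous expansion of $[sx_2, x_1, x_3]$ causes the terms containing $s$ alone or $[s, x_3]$ to cancel by antisymmetry, leaving (\ref{generator[sx][xx]}) in $W$. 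Substituting (\ref{generator[sx][xx]}) back into the displayed identity yields (\ref{generators[xxx]}); analogous expansions of $[sx_1 x_2, x_3, x_4]$ and $[[s, x_1], x_2 x_3, x_4]$, together with appropriate antisymmetrizations, deliver (\ref{generator[sx][xxx]}) and (\ref{generators[xx][xx]}).

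For the reverse inclusion $W \subseteq V$, since $V$ is a two-sided ideal it suffices to prove $[u, a_1, a_2] \in V$ for every $u \in U$ and $a_1, a_2 \in A$. By $R$-linearity of the triple commutator and the equality $U = \sum_{s \in S} A s A$, one reduces to $u = asb$ with $s \in S$ and $a, b \in A$. Two applications of the Leibniz rule express $[asb, a_1, a_2]$ as a sum of nine terms of the form $\alpha\, \sigma\, \beta$, where $\alpha, \beta \in A$ are built from $a, b, a_1, a_2$ and their commutators, and the distinguished $s$-factor $\sigma$ takes one of the shapes $s$, $[s, c]$, or $[s, c_1, c_2]$ with $c_i \in A$.

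The main obstacle is to show this sum of nine terms lies in $V$. For this we establish, by induction on word length in $X$, three absorption properties modulo $V$: first, $[s, c_1, c_2] \in V$ for all $c_1, c_2 \in A$, extended from (\ref{generator[sxx]}) via the derivation rule with the emerging cross-terms $[s, y][y', y'']$ absorbed by (\ref{generator[sx][xx]}); second, $s \cdot T^{(3)} \subseteq V$, derived from (\ref{generators[xxx]}) and (\ref{generators[xx][xx]}) together with the Latyshev generating set $\{[x_1, x_2, x_3],\ [x_1, x_2][x_3, x_4] + [x_1, x_3][x_2, x_4]\}$ of $T^{(3)}$; third, $[s, y] \cdot T^{(3)} \subseteq V$ for $y \in X$, from (\ref{generator[sx][xxx]}) and (\ref{generator[sx][xx]}). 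With these three properties in hand, the nine terms group into singletons (those with an $[s, c_1, c_2]$-factor or with $s$ adjacent to a $T^{(3)}$-element) and pairs (such as terms that differ by a transposition $a_1 \leftrightarrow a_2$), each of which, after using the Latyshev identity to rewrite products of two commutators modulo $T^{(3)}$, becomes an $A$-multiple of an element of (\ref{generator[sxx]})--(\ref{generators[xx][xx]}). The delicate combinatorial point is the inductive verification of the three absorption properties, where at every step the symmetrization provided by (\ref{generator[sx][xx]}) together with the Jacobi identity for associative commutators must jointly absorb every cross-term produced by the derivation rule.
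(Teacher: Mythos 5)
Your overall architecture is the same as the paper's: prove both inclusions, reduce the hard inclusion $W\subseteq V$ to monomial data via the Leibniz rule, and establish absorption facts by induction on word length. The easy inclusion $V\subseteq W$ is essentially fine: your antisymmetrization of $[sx_1,x_2,x_3]+[sx_2,x_1,x_3]$ does give (\ref{generator[sx][xx]}) and then (\ref{generators[xxx]}). However, the expansions you name for (\ref{generator[sx][xxx]}) and (\ref{generators[xx][xx]}) do not visibly produce those elements (the Leibniz expansion of $\bigl[[s,x_1],x_2x_3,x_4\bigr]$ yields terms $[s,x_1,x_2][x_3,x_4]$, etc., not $[s,x_1][x_2,x_3,x_4]$); the clean route, which the paper takes, is to note $[u,[a_1,a_2]]=[u,a_1,a_2]-[u,a_2,a_1]\in W$ for $u\in U$ and then use $[ua_3,[a_1,a_2]]=[u,[a_1,a_2]]a_3+u[a_3,[a_1,a_2]]$ to get $u[a_1,a_2,a_3]\in W$ for all $u\in U$, $a_i\in A$, which yields both (\ref{generator[sx][xxx]}) and (via one more expansion) (\ref{generators[xx][xx]}).

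The genuine gap is in the direction $W\subseteq V$. Your three absorption properties are true statements, but as stated they do not dispose of all nine terms of $[asb,a_1,a_2]$. The problematic terms are the symmetric pairs $[a,a_1][s,a_2]b+[a,a_2][s,a_1]b$ and $a\bigl([s,a_1][b,a_2]+[s,a_2][b,a_1]\bigr)$. Handling them requires, at a minimum, that $[s,x]$ and $[s,x_1][x_2,x_3]$ be central modulo $V$ (the paper's congruences (\ref{[sx]center}) and (\ref{[sx][xx]center})) together with the extension of (\ref{generator[sx][xx]}) from letters to arbitrary words, namely $[s,x][d_1,d_2]+[s,d_2][d_1,x]\equiv 0$ and $[s,x_1]\bigl([d,x_2][d',x_3]+[d,x_3][d',x_2]\bigr)\equiv 0 \pmod V$ for monomials $d,d_i$ (the paper's Lemmas \ref{lemma[sx][dd]} and \ref{lemma[sx][xx][xx]inI}). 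None of these appears among your properties (i)--(iii), and they do not follow from them: property (iii) only controls $[s,y]\cdot T^{(3)}$, whereas these expressions are sums of products of two commutators, one of which contains $s$, and are not elements of $U\cdot T^{(3)}$. Your proposed shortcut --- ``using the Latyshev identity to rewrite products of two commutators modulo $T^{(3)}$'' --- is not available, because you are working modulo $V$ and $T^{(3)}\not\subseteq V$; a Latyshev rewriting is legitimate only after the product has been prefixed by $s$ or $[s,y]$, which is exactly the reduction still to be justified. So the ``delicate combinatorial point'' you defer is not a routine verification: it is the bulk of the proof, and the chain of lemmas supplying it (centrality of $[s,x]$, then of $[s,x_1][x_2,x_3]$, then the word-length inductions for $[s,d,x]$, $[s,x][d_1,d_2]+[s,d_2][d_1,x]$ and $s[d_1,d_2,x]$) is missing from the proposal.
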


Let $I'$ be the two-sided ideal of $A$ generated by all elements of the forms (\ref{generator[sxx]}) and   (\ref{generator[sx][xx]}). We will prove the following proposition in Section 3.

\begin{proposition}
\label{proposition3vinI'}
Under the hypotheses of Theorem $\ref{generatorsW}$, for each element $v =  [s, x_1] [x_2,x_3,x_4]$ of the form $(\ref{generator[sx][xxx]})$, we have $3 \, v \in I'$. In particular, if $3$ is invertible in $R$ then $v \in I'$.
\end{proposition}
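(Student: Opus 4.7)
The plan is to work in the quotient $A/I'$ and prove that the element
\[
V(a,b,c,d) \;:=\; [s,x_a]\,[x_b,x_c,x_d]
\]
is totally antisymmetric in its first three positions modulo $I'$; four applications of the Jacobi identity to the inner bracket, one for each choice of outer index $a\in\{1,2,3,4\}$, will then yield a small linear system in four canonical representatives whose unique compatibility condition is precisely $3\,V(1,2,3,4) \in I'$.

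The antisymmetry step uses only the two families of generators of $I'$. The generators of type $(\ref{generator[sxx]})$ give $[s,x_a]\,x_d \equiv x_d\,[s,x_a] \pmod{I'}$, so writing $[x_b,x_c,x_d] = [x_b,x_c]\,x_d - x_d\,[x_b,x_c]$ and moving $[s,x_a]$ past $x_d$ modulo $I'$ produces
\[
V(a,b,c,d) \;\equiv\; \bigl[[s,x_a][x_b,x_c],\,x_d\bigr] \pmod{I'}.
\]
Next, the generators of type $(\ref{generator[sx][xx]})$ let me replace $[s,x_a][x_b,x_c]$ by $-[s,x_b][x_a,x_c]$ inside this outer bracket; a second Leibniz expansion, in which the correction $[[s,x_b],x_d] = [s,x_b,x_d]$ lies in $I'$, then gives
\[
V(a,b,c,d) + V(b,a,c,d) \;\equiv\; 0 \pmod{I'}.
\]
Combined with the exact identity $V(a,b,c,d) = -V(a,c,b,d)$ coming from antisymmetry of the inner Lie bracket, this shows that $V$ is antisymmetric under every transposition of positions $1,2,3$, hence under the full symmetric group $S_3$ acting on those three positions.

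Setting $v := V(1,2,3,4)$ and, for $i\in\{1,2,3\}$, letting $v_i := V(\pi_i,i)$ where $\pi_i$ is the strictly increasing listing of $\{1,2,3,4\}\setminus\{i\}$, I would then write the Jacobi identity $[x_j,x_k,x_\ell] + [x_k,x_\ell,x_j] + [x_\ell,x_j,x_k] = 0$ inside each of $V(1,\cdot,\cdot,\cdot), V(2,\cdot,\cdot,\cdot), V(3,\cdot,\cdot,\cdot), V(4,\cdot,\cdot,\cdot)$, obtaining four exact equalities in $A$. Using the antisymmetry from the previous step to canonicalize each of the resulting twelve $V$-symbols as $\pm v$ or $\pm v_i$, these four Jacobi identities become four linear congruences in the unknowns $v,v_1,v_2,v_3$ modulo $I'$, and a short computation shows that the system has rank exactly three and its single compatibility condition reads $3\,v \equiv 0 \pmod{I'}$.

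The main obstacle I anticipate is the sign bookkeeping in the canonicalization step: each summand must be rewritten as $(\operatorname{sgn}\sigma)\,v$ or $(\operatorname{sgn}\sigma)\,v_i$ for the appropriate $\sigma \in S_3$ permuting positions $1,2,3$, and a single sign error would destroy the system. Once that bookkeeping is carried out correctly, the whole argument uses only the generators of $I'$ and exact identities in $A$, with no division and no restriction on $R$; the ``in particular'' clause $v \in I'$ under the assumption $\tfrac{1}{3} \in R$ is then immediate.
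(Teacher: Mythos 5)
Your proposal is correct, and its first half is exactly the paper's argument: using the generators $(\ref{generator[sxx]})$ to make $[s,x]$ central modulo $I'$ and the generators $(\ref{generator[sx][xx]})$ to swap the first two indices, the paper derives precisely your congruence $[s,x_1][x_2,x_3,x_4]+[s,x_2][x_1,x_3,x_4]\equiv 0 \pmod{I'}$. Where you diverge is the endgame: the paper at that point simply invokes its Corollary \ref{corollary3[sz][zzz]} (proved in the Preliminaries via Lemma \ref{sigma[sz][zzz]}), which upgrades the $(12)$-antisymmetry to full $S_4$-antisymmetry --- including the transposition $(14)$ that moves the last index --- and then applies the Jacobi identity once to the cyclic sum $[z_2,z_3,z_4]+[z_3,z_4,z_2]+[z_4,z_2,z_3]=0$. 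You instead keep the last index fixed, use only the $S_3$-antisymmetry on the first three positions, and compensate by taking four instances of the Jacobi identity; I checked that your four congruences in $v,v_1,v_2,v_3$ do force $v_1\equiv v_3\equiv -v$, $v_2\equiv v$ and hence $3v\equiv 0$, so the linear system closes as claimed. The two endgames encode the same combinatorial fact (antisymmetry plus Jacobi yields $3$-torsion); the paper's is shorter only because the relevant lemma is already packaged and reused elsewhere (e.g.\ in the proof of Lemma \ref{lemma[ua][aa]3u[aaa]}), while yours avoids having to establish the extra transposition at the cost of more sign bookkeeping.
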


The assertion below follows immediately from Theorem \ref{generatorsW} and Proposition \ref{proposition3vinI'}.

\begin{corollary}
\label{corollarygeneratorsW}
If $\frac{1}{3} \in R$ then, under the hypotheses of Theorem $\ref{generatorsW}$, the ideal $W$ of $A$ is generated by the elements of the forms $(\ref{generator[sxx]})$--$(\ref {generators[xxx]})$ and  $(\ref{generator[sx][xx]})$--$(\ref{generators[xx][xx]})$.
\end{corollary}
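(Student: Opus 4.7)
The plan is to use Theorem \ref{generatorsW} to obtain a generating set for $W$ consisting of the five families $(\ref{generator[sxx]})$--$(\ref{generators[xx][xx]})$, and then to invoke Proposition \ref{proposition3vinI'} to show that, under the assumption $\frac{1}{3} \in R$, the middle family $(\ref{generator[sx][xxx]})$ is redundant. Concretely, I would let $J$ denote the two-sided ideal of $A$ generated by the four families $(\ref{generator[sxx]})$, $(\ref{generators[xxx]})$, $(\ref{generator[sx][xx]})$, $(\ref{generators[xx][xx]})$. All of these generators also generate $W$, so the inclusion $J \subseteq W$ is automatic, and the task reduces to proving $W \subseteq J$.

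For the reverse inclusion, I would first observe that the ideal $I'$ of Proposition \ref{proposition3vinI'} is generated by families $(\ref{generator[sxx]})$ and $(\ref{generator[sx][xx]})$, both of which already appear among the generators of $J$; hence $I' \subseteq J$. Next, given an arbitrary element $v = [s, x_1][x_2, x_3, x_4]$ of form $(\ref{generator[sx][xxx]})$, Proposition \ref{proposition3vinI'} yields $3v \in I' \subseteq J$, and multiplying by $\frac{1}{3} \in R$ then gives $v \in J$. Since every generator of family $(\ref{generator[sx][xxx]})$ lies in $J$, and the remaining four families lie in $J$ by construction, Theorem \ref{generatorsW} now forces $W \subseteq J$, so $W = J$ as required.

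There is essentially no serious obstacle: the substance of the corollary is hidden inside Theorem \ref{generatorsW} and Proposition \ref{proposition3vinI'}. The only things to check are the trivial inclusion $I' \subseteq J$ and the use of the hypothesis $\frac{1}{3} \in R$ to upgrade the divisibility statement $3v \in I'$ to the membership $v \in J$; both are one-line verifications.
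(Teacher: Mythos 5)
Your proposal is correct and coincides with the paper's own treatment: the paper gives no separate proof, stating only that the corollary follows immediately from Theorem \ref{generatorsW} and Proposition \ref{proposition3vinI'}, and your write-up simply spells out that deduction (noting $I' \subseteq J$ and using $\frac{1}{3} \in R$ to pass from $3v \in I'$ to $v \in J$).
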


\noindent
\textbf{Remarks.}
1. In general, $A[T^{(n-1)}, A] A \ne T^{(n)}$ because, in general, $[T^{(n-1)}, A] \nsubseteq T^{(n)}$. More precisely, if $\frac{1}{6} \in R$ and $n = 2k + 1$ is odd then $[T^{(2k + 1)}, A] \subseteq T^{(2k + 2)}$, therefore $A[T^{(2k + 1)}, A] A = T^{(2k + 2)}$ $(k \ge 0)$. However, in general,  if $n = 2k$ is even then $[T^{(2k )}, A] \nsubseteq T^{(2k + 1)}$ so  $A [T^{(2k )}, A] A \ne T^{(2k + 1)}$ $(k \ge 1)$. 

Indeed, one can check that each element of $T^{(n-1)}$ is a sum of elements of the form $[a_1, \dots , a_{n-1}] b$ where $a_i, b \in A$. Hence, each element of $[T^{(n-1)}, A]$ is a sum of elements of the form 
\[
\bigl[ [a_1, \dots , a_{n-1}] b, c \bigr] = [a_1, \dots , a_{n-1} , c] b + [a_1, \dots , a_{n-1}] [b, c]  \qquad (a_i, b, c \in A).
\]
Since $[a_1, \dots , a_{n-1} , c] b \in T^{(n)}$, we have $\bigl[ [a_1, \dots , a_{n-1}] b, c \bigr] \in T^{(n)}$ if, and only if, $[a_1, \dots , a_{n-1}] [b, c] \in T^{(n)}$. It follows that $[T^{(n-1)}, A] \subseteq T^{(n)}$ if, and only if, $T^{(n-1)} T^{(2)} \subseteq T^{(n)}$.

Now it remains to note that if $\frac{1}{6} \in R$ then, for $k \ge 0$, we have $T^{(2k + 1)} T^{(2)} \subseteq T^{(2k + 2)}$ (see \cite{BJ10, GP15, SS90}). On the other hand, in general, for $k \ge 1$, we have $T^{(2k)} T^{(2)} \nsubseteq T^{(2k+1)}$ (\cite{GTS11}, see also \cite{DK17,GP15}).

2. In general, $[T^{(n-2)} , A, A]  \nsubseteq  T^{(n)}$ if $\frac{1}{3} \notin R$. In particular, if $R = \mathbb Z$ and $A = \mathbb Z \langle X \rangle$ is the free unital associative ring on a free generating set $X$, $|X| \ge 5$, then $[ T^{(2)}, A, A] \nsubseteq T^{(4)}$.

Indeed, we have
\[
\bigl[ [x_1, x_2] x_3, x_4, x_5 \bigr] = [x_1, x_2] [x_3, x_4, x_5] + [x_1, x_2, x_4] [x_3, x_5] + [x_1, x_2, x_5] [x_3, x_4] + [x_1, x_2, x_4, x_5] x_3 .
\]
It is clear that $[x_1, x_2, x_4, x_5] x_3 \in T^{(4)}$. On the other hand, by Lemma \ref{lemma[ua][aa]3u[aaa]}  below, $ [x_1, x_2, x_4] [x_3, x_5] + [x_1, x_2, x_5] [x_3, x_4] \in T^{(4)}$. It follows that $\bigl[ [x_1, x_2] x_3, x_4, x_5 \bigr] \in T^{(4)}$ if, and only if, $[x_1, x_2] [x_3, x_4, x_5]  \in T^{(4)}$. However, if $x_1,  \dots , x_5$ are distinct elements of $X \subset \mathbb Z \langle X \rangle$ then, by \cite{Kras13}, $[x_1, x_2] [x_3, x_4, x_5]  \notin T^{(4)}$ (although $3 \ [x_1, x_2] [x_3, x_4, x_5]  \in T^{(4)}$). Thus, for $A = \mathbb Z \langle X \rangle$, $|X| \ge 5$, we have  $[ T^{(2)}, A, A] \nsubseteq T^{(4)}$, as claimed.

3. Let $A$ be a unital associative algebra generated by a set $X$ and let $U$ be a two-sided ideal in $A$ generated by a set $S$. It is easy to check that the two-sided ideal $V$ of $A$ generated by the set
\[
\{ [u, a] \mid u \in U, a \in A \}
\]
can be generated by the elements $[s,x]$  and $s [x_1, x_2]$ $( s \in S, x, x_i \in X)$. 

4. The set
\[
S^{(n)} = \bigl\{ [y_1, y_2, \dots , y_n] \mid y_1, y_n \in X, \ y_2, \dots , y_{n-1} \in X \cup X^2 \bigr\} 
\]
of generators of the two-sided ideal $T^{(n)}$ of $A$ described in Theorem \ref{generatorsTn} is not, in general, a minimal generating set for $T^{(n)}$. Indeed, let, for instance,
\begin{align*}
S = & \bigl\{ [y_1, y_2, \dots , y_n] \mid   y_k \in X^2 \mbox{ for some } k, 2 \le k \le n-1; \ \ y_i \in X \mbox{ for all } i \ne k \bigr\} ,
\\
S' = & \bigl\{ [y_1, y_2, \dots , y_n] \mid  y_2 \in X^2, \ y_i \in X \mbox{ for all } i \ne 2 \bigr\} .
\end{align*}
If $n > 3$ then $S' \subsetneqq S \subsetneqq S^{(n)}$; on the other hand, one can easily check that $S$ is contained in the $\mathbb Z$-linear span of $S'$ so the sets $S^{(n)}$ and $(S^{(n)} \setminus S) \cup S'$ generate the same ideal $T^{(n)}$.   

5. Theorem \ref{verification[a1an]=0} shows that if $\frac{1}{3} \in R$ then an associative $R$-algebra $A$ generated by a set $X$ is Lie nilpotent if and only if its Lie subalgebra generated by the set $X \cup X^2$ is nilpotent. In particular, if $X$ is finite and every finitely generated Lie subalgebra of $A$ is nilpotent (of some class depending on the Lie subalgebra) then $A$ is Lie nilpotent. This answers (for an $R$-algebra $A$ with $\frac{1}{3} \in R$) a question  asked by Sysak \cite[Question 3.1]{Sysak10}.

6. Bapat and Jordan \cite[Corollary 1.5]{BJ10} have used of a result of Arbesfeld and Jordan \cite[Theorem 1.3]{AJ10} to  prove the following. Let $F$ be a field of characteristic $0$ and let $A$ be an associative $F$-algebra generated by a set $X$. Define $L_1 = A$, $L_{n+1} = [L_n, A]$ $(n=1,2, \dots )$. Then $L_n / L_{n+1}$ is spanned by the commutators $[z, y_1, y_2, \dots y_{n-1}] + L_{n+1}$ where $z$ is a product of (any number of) elements of $X$, $y_1 \in X$ and $y_i \in X \cup X^2$ $(i = 2, \dots , n-1)$. This result and Theorem \ref{generatorsTn} have `similar flavor'. However, to the best of our knowledge, these results are independent in a sense that none of them can be deduced from another.


\section{Preliminaries}

Let $A$ be an associative ring and let $a_i, b_j \in A$. In the proofs we will make use of the following identities:
\[
[a_1 a_2, b] = a_1 [a_2, b] + [a_1, b] a_2
\]
and, more generally,
\[
[a_1 \dots a_k, b] = \sum_{i=1}^k a_1 \dots a_{i-1} [a_i, b] a_{i+1} \dots a_k ;
\]
\[
[a_1 a_2, b_1, b_2] = a_1 [a_2, b_1, b_2] + [a_1, b_1][a_2, b_2] + [a_1, b_2][a_2, b_1] + [a_1, b_1, b_2] a_2
\]
and, more generally,
\begin{align*}
[a_1 \dots a_k, b_1, b_2] = & \sum_{i=1}^k a_1 \dots a_{i-1} [a_i, b_1, b_2] a_{i+1} \dots a_k 
\\
+ & \sum_{1 \le i < i' \le k} \bigl( a_1 \dots a_{i-1} [a_i, b_1] a_{i+1} \dots a_{i' - 1} [a_{i'}, b_2] a_{i' + 1} \dots a_k 
\\
+ & a_1 \dots a_{i-1} [a_i, b_2] a_{i+1} \dots a_{i' - 1} [a_{i'}, b_1] a_{i' + 1} \dots a_k \bigr) ;
\end{align*}
\begin{align*}
[a_1 a_2, b_1, b_2, b_3] = & a_1 [a_2, b_1, b_2, b_3] + [a_1, b_1] [a_2, b_2, b_3] + [a_1, b_2][a_2, b_1, b_3] + [a_1, b_3][a_2, b_1, b_2] 
\\
+ & [a_1, b_1, b_2] [a_2, b_3] + [a_1, b_1, b_3] [a_2, b_2] + [a_1, b_2, b_3] [a_2, b_1] + [a_1, b_1, b_2, b_3] a_2 .
\end{align*}

The following lemma is well known.

\begin{lemma}
\label{L2}
Let $A$ be an associative $R$-algebra generated by a set $X$. Then, for all $a,b \in A$, the commutator $[a,b]$ is a linear combination of the commutators of the form $[d, x]$ where $x \in X$, $d \in A$.
\end{lemma}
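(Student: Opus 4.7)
The plan is to reduce, by $R$-linearity, to the case where $b$ is a monomial in $X$, and then to iterate a single cyclic identity. Since $A$ is generated by $X$ as an $R$-algebra, every element of $A$ is an $R$-linear combination of monomials $v_1 v_2 \cdots v_k$ with $v_i \in X$ (reading the empty product as $1$), so by linearity of $[a, \cdot\,]$ it suffices to handle $b = v_1 v_2 \cdots v_k$. I would proceed by induction on $k$. The cases $k = 0$ (where $[a, 1] = 0$) and $k = 1$ (where $[a, v_1]$ is already of the required form with $d = a$, $x = v_1 \in X$) are immediate.

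The key identity underlying the inductive step is
\[
[u, vw] = [uv, w] + [wu, v],
\]
valid for all $u, v, w \in A$; it follows from a one-line expansion, since both sides equal $uvw - vwu$. Applying it with $u = a$, $v = v_1 \cdots v_{k-1}$, and $w = v_k$ splits $[a, v_1 \cdots v_k]$ into two pieces: the commutator $[a v_1 \cdots v_{k-1}, v_k]$, which is already of the required form because $v_k \in X$, and the commutator $[v_k a, v_1 \cdots v_{k-1}]$, to which the inductive hypothesis applies since its second argument is a monomial in $X$ of length $k - 1$.

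I do not foresee any real obstacle: the entire argument hinges on spotting the cyclic identity $[u, vw] = [uv, w] + [wu, v]$, after which the induction is bookkeeping. Iterating gives the explicit closed form
\[
[u, v_1 \cdots v_k] \;=\; \sum_{i=1}^{k} \bigl[\, v_{i+1} \cdots v_k \cdot u \cdot v_1 \cdots v_{i-1},\; v_i \,\bigr],
\]
in which every summand is manifestly of the shape $[d, x]$ with $x \in X$; this closed form is, however, not actually needed for the induction to go through.
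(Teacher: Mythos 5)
Your proof is correct and takes essentially the same approach as the paper: the closed form $\sum_{i=1}^{k}[\,v_{i+1}\cdots v_k\, u\, v_1\cdots v_{i-1},\,v_i\,]$ you write at the end is exactly the telescoping identity the paper derives directly (after reducing both arguments to monomials). The only cosmetic difference is that you package the telescoping as an induction on the length of $b$ via the cyclic identity $[u,vw]=[uv,w]+[wu,v]$.
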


\begin{proof}
It is clear that each commutator $[a,b]$ $(a, b \in A)$ is a linear combination over $R$ of the commutators of the form $[y_1 \dots y_k , z_1 \dots z_{\ell}]$ where $k, \ell \ge 1$, $y_i, z_j \in X$. On the other hand,
\begin{align*}
& [y_1 \dots y_k , z_1 \dots z_{\ell}] = y_1 \dots y_k  z_1 \dots z_{\ell} -  z_1 \dots z_{\ell} y_1 \dots y_k
\\
= \ & \bigl( y_1 \dots y_k  z_1 \dots  z_{\ell} - z_{\ell} y_1 \dots y_k  z_1 \dots  z_{\ell - 1}  \bigr) + \bigl( z_{\ell} y_1 \dots y_k  z_1 \dots  z_{\ell - 1}  - z_{\ell - 1} z_{\ell} y_1 \dots y_k  z_1 \dots z_{\ell - 2}  \bigr)
\\
+ \ & \dots + \bigl(  z_2 \dots z_{\ell} y_1 \dots y_{k} z_1  -  z_1 \dots z_{\ell} y_1 \dots y_k \bigr)
\\
= \ & [y_1 \dots y_k z_1 \dots z_{\ell - 1}, z_{\ell}] + [z_{\ell} y_1 \dots y_k  z_1 \dots z_{\ell -2}, z_{\ell - 1} ] + \dots + [ z_2 \dots z_{\ell} y_1 \dots y_{k}, z_1] .
\end{align*}
The result follows.
\end{proof}

We need the following lemmas.

\begin{lemma}
\label{lemma[dz][zz]=0}
Let $B$ be an associative ring. Let $d \in B$, $Z \subset B$. Suppose that $[d, z_1, z_2] = 0$ and $[d, z_1 z_2, z_3] = 0$ for all $z_i \in Z$. Then, for all $z_i \in Z$, 
\[
[d, z_1] [z_2, z_3] + [d, z_2] [z_1, z_3] = 0 .
\]
\end{lemma}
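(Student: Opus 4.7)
\medskip
\noindent\textbf{Proof plan for Lemma \ref{lemma[dz][zz]=0}.}
The plan is to expand the hypothesis $[d, z_1 z_2, z_3] = 0$ using the Leibniz-type identities listed at the start of the Preliminaries section, simplify using the other hypothesis $[d,z_i,z_j]=0$, and then repair the order of factors by a short commutation argument.

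First I would note that $[d, z_1 z_2] = [d, z_1] z_2 + z_1 [d, z_2]$ and then take the commutator with $z_3$. Expanding each piece by the Leibniz rule yields
\[
[d, z_1 z_2, z_3] = [d, z_1][z_2, z_3] + [d, z_1, z_3]\, z_2 + z_1\,[d, z_2, z_3] + [z_1, z_3][d, z_2].
\]
By the hypothesis $[d, z_1, z_3] = 0 = [d, z_2, z_3]$, the two middle terms vanish, and by the hypothesis $[d, z_1 z_2, z_3] = 0$ the left-hand side is zero; hence
\[
[d, z_1][z_2, z_3] + [z_1, z_3][d, z_2] = 0.
\]

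This is almost the desired conclusion, except that in the second summand $[d,z_2]$ stands on the wrong side of $[z_1,z_3]$. To finish, I would observe that the hypothesis gives $[d, z_2, z_1] = 0$ and $[d, z_2, z_3] = 0$, i.e.\ $[d, z_2]$ commutes with each of $z_1$ and $z_3$, and therefore commutes with their difference $z_1 z_3 - z_3 z_1 = [z_1, z_3]$. Thus $[z_1, z_3][d, z_2] = [d, z_2][z_1, z_3]$, and substituting this into the previous displayed identity yields
\[
[d, z_1][z_2, z_3] + [d, z_2][z_1, z_3] = 0,
\]
as required.

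No step looks genuinely hard: the main point is to choose the right expansion of $[d,z_1z_2,z_3]$ (into four terms, two of which vanish by the first hypothesis and the remaining two of which are controlled by the second hypothesis), and to notice that the factor $[d,z_2]$ is ``central enough'' relative to $z_1$ and $z_3$ to be moved past $[z_1,z_3]$. The only thing to watch is signs and the order of factors in the Leibniz expansion of $[d,z_1z_2,z_3]$.
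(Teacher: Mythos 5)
Your proof is correct and follows essentially the same route as the paper: both expand $[d,z_1z_2,z_3]$ by the Leibniz rule into four terms, kill two of them with $[d,z_i,z_j]=0$, and then move $[d,z_2]$ past $[z_1,z_3]$ (the paper phrases this last step as $\bigl[[d,z_2],[z_1,z_3]\bigr]=[d,z_2,z_1,z_3]-[d,z_2,z_3,z_1]=0$, which is the same observation as your ``$[d,z_2]$ commutes with $z_1$ and $z_3$'').
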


\begin{proof}
Let $z_1, z_2, z_3 \in Z$ be arbitrary elements of $Z$. We have $[d, z_1 z_2, z_3] =0$. On the other hand, 
\begin{align*}
& [d, z_1 z_2, z_3] = - [z_1 z_2, d, z_3] = - z_1 [z_2, d, z_3] - [z_1, d] [z_2, z_3] - [z_1, z_3] [z_2, d] - [z_1, d, z_3] z_2
\\
& = z_1 [d, z_2, z_3] + [d, z_1] [z_2, z_3]  + [d, z_2] [z_1, z_3] - \bigl[ [d, z_2], [z_1, z_3] \bigr] + [d, z_1, z_3] z_2 .
\end{align*}
Since $[d, z_j, z_k] = 0$ and $\bigl[ [d, z_2], [z_1, z_3] \bigr]  = [d, z_2, z_1, z_3] - [d, z_2, z_3, z_1] = 0$ for all $z_i \in Z$, we have $[d, z_1] [z_2, z_3]  + [d, z_2] [z_1, z_3] = 0$, as required.
\end{proof}

\begin{lemma}
\label{lemma[cz][zzz]=0}
Let $B$ be an associative ring. Let $c \in B$, $Z \subset B$. Suppose that $[c, z_1, z_2, z_3] = 0$, $[c, z_1, z_2 z_3, z_4] = 0$ and $[c, z_1 z_2, z_3, z_4] = 0$ for all $z_i \in Z$. Then, for all $z_i \in Z$,
\[
[c, z_1] [z_2, z_3, z_4] + [c, z_2] [z_1, z_3, z_4] = 0 .
\]
\end{lemma}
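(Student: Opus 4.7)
The plan is to mimic the strategy of Lemma \ref{lemma[dz][zz]=0} one step up. First I will expand the hypothesis $[c, z_1 z_2, z_3, z_4] = 0$ by applying the Preliminaries identity for $[a_1 a_2, b_1, b_2, b_3]$ with $a_1 = z_1$, $a_2 = z_2$, $(b_1, b_2, b_3) = (c, z_3, z_4)$, and flipping signs via $[z_i, c, \ldots] = -[c, z_i, \ldots]$. The first hypothesis $[c, z_a, z_b, z_c] = 0$ (and its formal consequences at length $\ge 4$, obtained by bracketing with further elements) kills the terms $z_1 [c, z_2, z_3, z_4]$ and $[c, z_1, z_3, z_4] z_2$, leaving the identity
\[
[c, z_1][z_2, z_3, z_4] + [z_1, z_3][c, z_2, z_4] + [z_1, z_4][c, z_2, z_3] + [c, z_1, z_3][z_2, z_4] + [c, z_1, z_4][z_2, z_3] + [z_1, z_3, z_4][c, z_2] = 0.
\]

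Next I eliminate the four middle terms using Lemma \ref{lemma[dz][zz]=0} applied to $d = [c, z]$ for each $z \in Z$: the two hypotheses of that lemma translate exactly into our first and second hypotheses, so we obtain $[c, z, z_a][z_b, z_c] + [c, z, z_b][z_a, z_c] = 0$ for all $z, z_a, z_b, z_c \in Z$. Specializing to $z = z_1$ with permutation $(z_3, z_4, z_2)$ disposes of $[c, z_1, z_3][z_2, z_4] + [c, z_1, z_4][z_2, z_3]$. Specializing to $z = z_2$ with permutation $(z_4, z_3, z_1)$ yields $[c, z_2, z_4][z_1, z_3] + [c, z_2, z_3][z_1, z_4] = 0$; to match this with the pair $[z_1, z_3][c, z_2, z_4] + [z_1, z_4][c, z_2, z_3]$ that actually appears, I swap the two factors in each product, incurring ``defect'' commutators $[[c, z_2, z_4], [z_1, z_3]]$ and $[[c, z_2, z_3], [z_1, z_4]]$ which I handle below.

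The remaining term $[z_1, z_3, z_4][c, z_2]$ is rewritten as $[c, z_2][z_1, z_3, z_4] - [[c, z_2], [z_1, z_3, z_4]]$. All the defect commutators that arose are handled uniformly by iterated application of the Lie derivation identity $[u, [p, q]] = [[u, p], q] + [p, [u, q]]$. Each expansion flattens a nested bracket one step, and after at most two such expansions every defect reduces to a sum of left-normed iterated commutators $[c, z_{i_1}, \dots, z_{i_k}]$ with $k \ge 4$; all such commutators vanish since the first hypothesis gives $[c, z_{i_1}, z_{i_2}, z_{i_3}] = 0$, and further bracketing with elements of $B$ preserves this. Combining everything yields $[c, z_1][z_2, z_3, z_4] + [c, z_2][z_1, z_3, z_4] = 0$.

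The main technical obstacle is the bookkeeping of these defect commutators: every time we wish to commute a factor $[c, z_a, z_b]$ or $[c, z_a, z_b, z_c]$ past another Lie monomial, a commutator correction appears, and one must verify that repeated derivation-expansion terminates with only left-normed iterated commutators starting with $c$. Once one fixes the principle that $[c, z_{i_1}, \ldots, z_{i_k}] = 0$ for all $k \ge 3$ (with entries in $Z$), every defect collapses routinely, and the proof is purely a matter of organizing these cancellations in the correct order.
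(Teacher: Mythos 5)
Your proposal is correct and follows essentially the same route as the paper's proof: both first obtain the auxiliary identity $[c,z,z_a][z_b,z_c]+[c,z,z_b][z_a,z_c]=0$ by applying Lemma \ref{lemma[dz][zz]=0} to $d=[c,z]$, then expand $[c,z_1z_2,z_3,z_4]=0$ via the Leibniz-type identity for $[a_1a_2,b_1,b_2,b_3]$ and dispose of every term using the hypotheses, this auxiliary identity, and the vanishing of the defect commutators, each being a linear combination of left-normed commutators $[c,z_{i_1},\dots,z_{i_k}]$ with $k\ge 3$ entries from $Z$. The only difference is presentational: the paper writes the factor-swapping corrections directly into its regrouped expansion, whereas you introduce and discharge them afterwards.
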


\begin{proof}
Let $z_1 \in Z$ be an arbitrary element of $Z$. Let $d = [c, z_1]$. Then we have $[d, z_2, z_3] = [d, z_2 z_3, z_4] = 0$ for all $z_i \in Z$ so, by Lemma \ref{lemma[dz][zz]=0}, $[d, z_2] [z_3,z_4] + [d, z_3] [z_2, z_4] = 0$ for all $z_i \in Z$. In other words, for all $z_i \in Z$, we have
\begin{equation}
\label{[czz][zz]=0}
[c, z_1, z_2][z_3, z_4] + [c, z_1, z_3] [z_2, z_4] = 0 .
\end{equation}

Further, let $z_1, z_2, z_3, z_4 \in Z$ be arbitrary elements of $Z$. We have $[c, z_1 z_2, z_3, z_4] = 0$. On the other hand,
\begin{align*}
& [c, z_1 z_2, z_3, z_4] = - [z_1 z_2, c, z_3, z_4] =  - z_1 [z_2, c, z_3, z_4] - [z_1, c] [z_2, z_3, z_4] - [z_1, z_3] [z_2, c, z_4] 
\\
& - [z_1 , z_4] [z_2, c, z_3] - [z_1, c, z_3][ z_2, z_4] - [z_1, c, z_4] [z_2, z_3] - [z_1, z_3, z_4] [z_2, c] - [z_1, c, z_3, z_4] z_2
\\
& =  z_1 [c, z_2, z_3, z_4]  + \bigl(  [c, z_1] [z_2, z_3, z_4] + [c, z_2] [z_1, z_3, z_4] \bigr) -  \bigl[ [c, z_2], [z_1, z_3, z_4] \bigr]  - \bigl(  [c, z_2, z_4] [z_3, z_1] 
\\
& +  [c, z_2, z_3]  [z_4, z_1] \bigr) +  \bigl[ [c, z_2, z_4], [z_3, z_1] \bigr] + \bigl[ [c, z_2, z_3] , [z_4, z_1] \bigr] - \bigl( [c, z_1, z_3][ z_4, z_2] + [c, z_1, z_4] [z_3, z_2] \bigr)
\\
& + [c, z_1, z_3, z_4] z_2 .
\end{align*}
Clearly, $ [c, z_i, z_3, z_4] = 0$ $(i = 1, 2)$. By (\ref{[czz][zz]=0}), we have $ [c, z_i, z_4] [z_3, z_j] 
+  [c, z_i, z_3]  [z_4, z_j] = 0$ $(i,j = 1,2)$. It is clear that the commutators $\bigl[ [c, z_2], [z_1, z_3, z_4] \bigr]$, $\bigl[ [c, z_2, z_4], [z_3, z_1] \bigr] $ and $\bigl[ [c, z_2, z_3] , [z_4, z_1] \bigr]$  are linear combinations of the commutators of the form $[c, z_{i_1}, z_{i_2} , z_{i_3}, z_{i_4}] $ so these commutators are equal to $0$. It follows that 
\[
[c, z_1] [z_2, z_3, z_4] + [c, z_2] [z_1, z_3, z_4] = 0, 
\]
as required.
\end{proof}

The following lemma and its corollary are well known (see, for instance, \cite[Lemma 2.3 and Corollary 2.4]{Kras13}).

\begin{lemma}
\label{sigma[sz][zzz]}
Let $B$ be an associative ring. Let $c \in B$, $Z \subset B$. Suppose that
\[
[c, z_1] [z_2, z_3, z_4] + [c, z_2] [z_1, z_3, z_4] = 0
\]
for all $z_i \in Z$. Then, for all $z_i \in Z$ and all permutations $\sigma$ of the set $\{ 1, 2, 3, 4 \}$, 
\[
[c, z_{\sigma (1) }] [z_{\sigma (2) }, z_{\sigma (3) }, z_{\sigma (4) } ] = (- 1)^{\sigma} [c, z_1] [z_2, z_3, z_4].
\]
\end{lemma}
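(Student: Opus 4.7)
The plan is to prove that $h(z_1, z_2, z_3, z_4) := [c, z_1][z_2, z_3, z_4]$ is antisymmetric under the $S_4$-action permuting its four arguments. Since $S_4$ is generated by the transpositions $(1\,2)$, $(2\,3)$, and $(1\,4)$ (with $(2\,4) = (1\,2)(1\,4)(1\,2)$ and $(3\,4) = (1\,3)(1\,4)(1\,3)$), it is enough to verify antisymmetry under each of these three. The $(1\,2)$-antisymmetry is exactly the hypothesis, and the $(2\,3)$-antisymmetry is immediate from $[z_2, z_3, z_4] = [[z_2, z_3], z_4] = -[z_3, z_2, z_4]$. These two together yield the $S_3$-antisymmetry of $h$ in its first three arguments: $h(z_{\pi(1)}, z_{\pi(2)}, z_{\pi(3)}, z_4) = (-1)^\pi h(z_1, z_2, z_3, z_4)$ for every $\pi \in S_3$.

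The main step is the $(1\,4)$-antisymmetry. My idea is to apply the Jacobi identity $[z_i, z_j, z_k] + [z_j, z_k, z_i] + [z_k, z_i, z_j] = 0$ to each of the two inner commutators appearing in the hypothesis $h(z_1, z_2, z_3, z_4) + h(z_2, z_1, z_3, z_4) = 0$. This converts the hypothesis into the four-term relation
\[
h(z_1, z_3, z_4, z_2) + h(z_1, z_4, z_2, z_3) + h(z_2, z_3, z_4, z_1) + h(z_2, z_4, z_1, z_3) = 0.
\]
I then normalize each summand using the $S_3$-antisymmetry from the previous paragraph, rearranging the first three arguments into sorted-by-index order. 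A direct sign computation gives $h(z_1, z_4, z_2, z_3) = -h(z_1, z_2, z_4, z_3)$ (a single transposition, sign $-1$) and $h(z_2, z_4, z_1, z_3) = +h(z_1, z_2, z_4, z_3)$ (a $3$-cycle, sign $+1$), so the second and fourth terms cancel, leaving
\[
h(z_1, z_3, z_4, z_2) + h(z_2, z_3, z_4, z_1) = 0.
\]
After relabeling, this is precisely $h(a, b, c, d) = -h(d, b, c, a)$, the required $(1\,4)$-antisymmetry.

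Combining the three antisymmetries yields the full $S_4$-antisymmetry and thus the conclusion. The main obstacle is the Jacobi-expansion step above: the hypothesis alone produces no symmetry involving position $4$, so one must apply Jacobi to both sides of the hypothesis simultaneously and then carefully track the signs of the $S_3$-conjugate terms during normalization. The remaining verifications — that $(1\,2), (2\,3), (1\,4)$ generate $S_4$ and that signs combine into $(-1)^\sigma$ for any $\sigma \in S_4$ — are routine.
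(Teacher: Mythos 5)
Your proof is correct and takes essentially the same route as the paper: both reduce the claim to antisymmetry under the generating transpositions $(1\,2)$, $(2\,3)$, $(1\,4)$ of $S_4$, with the first given by the hypothesis, the second by antisymmetry of the inner commutator, and the third extracted from the Jacobi identity combined with the hypothesis. Your derivation of the $(1\,4)$ case via the four-term relation and cancellation of two terms is a slight reorganization of the paper's direct chain of equalities $[c,z_1][z_2,z_3,z_4] = -[c,z_1]\bigl[z_4,[z_2,z_3]\bigr] = \dots = -[c,z_4][z_2,z_3,z_1]$, but the underlying computation is the same.
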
 

\begin{proof}
We have 
\begin{equation}
\label{[s2][134]}
[c, z_1] [z_2, z_3, z_4] = - [c, z_2] [z_1, z_3, z_4] .
\end{equation}
On the other hand, by the Jacobi identity and (\ref{[s2][134]}), 
\begin{align*}
& [c, z_1] [z_2, z_3, z_4] = - [c, z_1] \bigl[ z_4, [z_2, z_3] \bigr] =  - [c, z_1] [z_4, z_2, z_3] + [c, z_1] [z_4, z_3, z_2] 
\\
=  \ & [c, z_4] [z_1, z_2, z_3] - [c, z_4] [z_1, z_3, z_2] = [c, z_4] \bigl[ z_1, [z_2, z_3] \bigr] = - [c, z_4] [z_2, z_3, z_1] ,
\end{align*}
that is, 
\begin{equation}
\label{[s4][231]}
 [c, z_1] [z_2, z_3, z_4]  = - [c, z_4] [z_2, z_3, z_1] .
\end{equation}
Further, we have 
\begin{equation}
\label{[s1][324]}
 [c, z_1] [z_2, z_3, z_4]  = -  [c, z_1] [z_3, z_2, z_4] .
\end{equation}
It follows from (\ref{[s2][134]}), (\ref{[s4][231]}) and (\ref{[s1][324]}) that 
\[
 [s, z_{\tau (1)}] [z_{\tau (2)}, z_{\tau (3)}, z_{\tau (4)}]  = (- 1)^{\tau}  [s, z_1] [z_2, z_3, z_4] 
\]
where $\tau$ is any of the transpositions $(12)$, $(14)$, $(23)$. Suppose that  $\sigma = \tau_1 \tau_2 \dots \tau_k$ where $k >1$, $\tau_i \in \{ (12), (14), (23) \}$ for all $i$;  then
\[
[c, z_{\sigma (1) }] [z_{\sigma (2) }, z_{\sigma (3) }, z_{\sigma (4) } ] = (- 1)^{\sigma} [c, z_1] [z_2, z_3, z_4].
\]
Since the transpositions $(12)$, $(14)$, $(23)$ generate the entire group $S_4$ of the permutations of the set $\{ 1, 2, 3, 4 \}$, the result follows.
\end{proof}

\begin{corollary}
\label{corollary3[sz][zzz]}
Under the hypotheses of Lemma $\ref{sigma[sz][zzz]}$,  for all $z_i \in Z$, we have
\[
3 \ [c, z_1] [z_2, z_3, z_4] = 0. 
\]
\end{corollary}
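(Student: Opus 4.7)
The plan is to combine Lemma \ref{sigma[sz][zzz]} (which makes $[c, z_{\sigma(1)}][z_{\sigma(2)}, z_{\sigma(3)}, z_{\sigma(4)}]$ totally alternating in the four indices) with the Jacobi identity applied to the inner triple commutator. Concretely, I would start from the Jacobi identity
\[
[z_2, z_3, z_4] + [z_3, z_4, z_2] + [z_4, z_2, z_3] = 0
\]
in the associative ring $B$ (viewed as a Lie ring under $[\cdot,\cdot]$), and then left-multiply by $[c, z_1]$ to obtain
\[
[c, z_1][z_2, z_3, z_4] + [c, z_1][z_3, z_4, z_2] + [c, z_1][z_4, z_2, z_3] = 0.
\]

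Next I would rewrite the second and third terms using Lemma \ref{sigma[sz][zzz]}. The permutation taking $(1,2,3,4) \mapsto (1,3,4,2)$ fixes $1$ and acts on $\{2,3,4\}$ as the $3$-cycle $(2\,3\,4)$, which is even; similarly the permutation taking $(1,2,3,4) \mapsto (1,4,2,3)$ is the even $3$-cycle $(2\,4\,3)$. Hence by Lemma \ref{sigma[sz][zzz]},
\[
[c, z_1][z_3, z_4, z_2] = [c, z_1][z_2, z_3, z_4] = [c, z_1][z_4, z_2, z_3].
\]
Substituting these equalities into the displayed sum immediately yields $3\,[c, z_1][z_2, z_3, z_4] = 0$, as required.

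There is no real obstacle here; the only point requiring care is the parity bookkeeping in applying Lemma \ref{sigma[sz][zzz]}, and since both cyclic rearrangements of the last three arguments are $3$-cycles, both have sign $+1$, so all three Jacobi summands produce the same element $[c, z_1][z_2, z_3, z_4]$.
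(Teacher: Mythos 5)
Your proof is correct and is essentially the same as the paper's: both left-multiply the Jacobi identity $[z_2,z_3,z_4]+[z_3,z_4,z_2]+[z_4,z_2,z_3]=0$ by $[c,z_1]$ and then use Lemma \ref{sigma[sz][zzz]} to identify the three resulting terms (the cyclic rearrangements being even permutations). The parity bookkeeping you carry out is accurate, so nothing is missing.
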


\begin{proof}
By the Jacobi identity, we have
\[
[c, z_1] [z_2, z_3, z_4] + [c, z_1] [z_3, z_4, z_2] + [c, z_1] [z_4, z_2, z_3] =  [c, z_1] \bigl( [z_2, z_3, z_4] + [z_3, z_4, z_2] + [z_4, z_2, z_3] \bigr) = 0.
\]
On the other hand, by Lemma \ref{sigma[sz][zzz]}, 
\[
[c, z_1] [z_2, z_3, z_4] = [c, z_1] [z_3, z_4, z_2] = [c, z_1] [z_4, z_2, z_3]. 
\]
The result follows.
\end{proof}

\begin{proof}[Proof of Proposition \ref{A[Tn-2AA]A=Tn} ]

This proposition has been proved by Grishin and Pchelintsev \cite[Lemma 2]{GP15} under the assumption that $6$ is invertible in $R$. Our proof is slightly different from one given in  \cite{GP15}, it is valid if $3$ is invertible in $R$. .

Since the ideal $T^{(n)}$ is generated by the commutators $[a_1, a_2, \dots , a_n]$ $(a_i \in A)$ and $[a_1, a_2, \dots , a_n] \in [T^{(n-2)}, A, A]$ for all  $a_i \in A$, we have 
\[
T^{(n)} \subseteq A[T^{(n-2)}, A, A] A.
\]
To prove the proposition it suffices to check that 
\[
[T^{(n-2)}, A, A] \subset T^{(n)};
\]
then $A[T^{(n-2)}, A, A]A \subseteq T^{(n)}$ and therefore $A[T^{(n-2)}, A, A]A = T^{(n)}$.

The following lemma is well known.

\begin{lemma}
\label{lemma[ua][aa]3u[aaa]}
Let $R$ be an arbitrary unital associative and commutative ring and let $A$ be an associative $R$-algebra. Let $u = [b_1, b_2, \dots, b_{n-2}]$ $(b_i \in A)$. Let $a_i \in A$. Then
\begin{itemize}
\item[i)] $[u, a_1] [a_2, a_3] + [u, a_2] [a_1, a_3] \in T^{(n)} $;
\item[ii)] $3 \ u [a_1, a_2, a_3] \in T^{(n)}$.
\end{itemize}
\end{lemma}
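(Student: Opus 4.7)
The plan is to deduce both parts by passing to the quotient ring $B = A / T^{(n)}$ and invoking Lemmas \ref{lemma[dz][zz]=0} and \ref{lemma[cz][zzz]=0} together with Corollary \ref{corollary3[sz][zzz]}. The recurring observation that powers the argument is that, since $u = [b_1, \dots, b_{n-2}]$ is already an $(n{-}2)$-fold commutator, every iterated commutator of the form $[u, \,\cdot\,, \dots, \,\cdot\,]$ with at least two further arguments is an $n$-fold commutator of elements of $A$ and therefore automatically lies in $T^{(n)}$.

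For part (i), I would let $d$ denote the image of $u$ in $B$ and take $Z = B$. The two hypotheses of Lemma \ref{lemma[dz][zz]=0}, namely $[d, z_1, z_2] = 0$ and $[d, z_1 z_2, z_3] = 0$, translate into $[u, a_1, a_2] \in T^{(n)}$ and $[u, a_1 a_2, a_3] \in T^{(n)}$, both of which hold by the observation above. The conclusion of Lemma \ref{lemma[dz][zz]=0} applied in $B$ is then exactly the statement of part (i).

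For part (ii), the case $n = 3$ is immediate since $[a_1, a_2, a_3] \in T^{(3)}$ already. For $n \ge 4$ the trick is to apply Lemma \ref{lemma[cz][zzz]=0} and Corollary \ref{corollary3[sz][zzz]} not to $u$ itself, but one level lower, namely to the $(n{-}3)$-fold commutator $c = [b_1, \dots, b_{n-3}]$ (so that $[c, b_{n-2}] = u$). Because $c$ is $(n{-}3)$-fold, the three hypotheses $[c, a_1, a_2, a_3]$, $[c, a_1, a_2 a_3, a_4]$, and $[c, a_1 a_2, a_3, a_4]$ of Lemma \ref{lemma[cz][zzz]=0} become $n$-fold commutators and hence lie in $T^{(n)}$, so the lemma applies in $B$. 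Lemma \ref{lemma[cz][zzz]=0} followed by Corollary \ref{corollary3[sz][zzz]} then yields $3 \, [c, z_1][z_2, z_3, z_4] \in T^{(n)}$ for all $z_i \in A$. Specializing $z_1 = b_{n-2}$ and renaming $(z_2, z_3, z_4) = (a_1, a_2, a_3)$ turns $[c, z_1]$ into $u$, producing $3 \, u [a_1, a_2, a_3] \in T^{(n)}$, as desired.

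The step most likely to be overlooked, and thus the main point I would flag, is the choice of $c$ in part (ii). Applying Corollary \ref{corollary3[sz][zzz]} directly to $c = u$ only yields $3[u, a_0][a_1, a_2, a_3] \in T^{(n)}$, which has the wrong shape (a factor of $[u, a_0]$ rather than $u$ itself). Dropping one level of nesting to $c = [b_1, \dots, b_{n-3}]$ and then specializing one of the $z_i$ to $b_{n-2}$ is precisely what converts $[c, z_1]$ into $u$ and delivers the stated identity.
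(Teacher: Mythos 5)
Your proof is correct and takes essentially the same approach as the paper's: part (i) is obtained by applying Lemma \ref{lemma[dz][zz]=0} in $B = A/T^{(n)}$ with $Z = B$, and part (ii) by applying Lemma \ref{lemma[cz][zzz]=0} and Corollary \ref{corollary3[sz][zzz]} one level down, to $c = [b_1, \dots, b_{n-3}]$, and then specializing an argument to $b_{n-2}$ so that $[c, b_{n-2}] = u$ --- which is exactly the paper's choice of $v$. Your explicit treatment of the degenerate case $n = 3$ is a small point the paper leaves implicit.
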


\begin{proof}
To prove i) we make use of Lemma \ref{lemma[dz][zz]=0} with $B=A/T^{(n)}$, $Z = B$. For all $a_1, a_2, a_3 \in A$, we have $[u, a_1, a_2] \equiv 0 \pmod{T^{(n)} }$ and $[u, a_1 a_2, a_3] \equiv 0 \pmod{ T^{(n)} }$. Then, by Lemma \ref{lemma[dz][zz]=0}, 
\[
[u, a_1] [a_2, a_3] + [u, a_2] [a_1, a_3]  \equiv 0 \pmod{T^{(n)} }
\]
for all $a_i \in A$, as required.

To prove ii) we apply Lemma \ref{lemma[cz][zzz]=0} and Corollary \ref{corollary3[sz][zzz]} with $B = A/T^{(n)}$ and $Z = A$. Let $v = [b_1, b_2, \dots , b_{n-3}]$; then $u = [v, b_{n-2}]$. For all $a_i' \in A$, we have 
\[
[v, a_1', a_2', a_3'] \equiv 0 \pmod{T^{(n)} }, \ \  [v, a_1', a_2' a_3', a_4'] \equiv 0 \pmod{T^{(n)} },  \ \  [v, a_1' a_2', a_3', a_4'] \equiv 0 \pmod{T^{(n)} }.
\]
Then, by Lemma \ref{lemma[cz][zzz]=0}, we have $[v, a_1'] [a_2', a_3', a_4'] + [v, a_2'] [a_1', a_3', a_4'] \equiv 0 \pmod{T^{(n)} }$ for all $a_i' \in A$ and, by Corollary \ref{corollary3[sz][zzz]}, $3 \ [v, a_1'] [a_2', a_3', a_4'] \equiv 0 \pmod{T^{(n)} }$ for all $a_i' \in A$. In particular, $3 \ [v, b_{n-2}] [a_2', a_3', a_4'] \equiv 0 \pmod{T^{(n)} }$, that is, 
\[
3 \ u [a_1, a_2, a_3] \equiv 0 \pmod{T^{(n)} }
\]
for all $a_i \in A$, as required. 
\end{proof}

Now we are in a position to complete the proof of Proposition \ref{A[Tn-2AA]A=Tn}. Recall that it remains to check that $[T^{(n-2)}, A, A] \subset T^{(n)}$. It is clear that $T^{(n-2)}$ is spanned by the elements $a_1[b_1, b_2, \dots , b_{n-2}] a_2$ $(a_i, b_j \in A)$. Since $a_1[b_1, b_2, \dots , b_{n-2}] a_2 = [b_1, b_2, \dots , b_{n-2}] a_1 a_2  + \bigl[ [b_1, b_2], \dots , b_{n-2}, a_1 \bigr] a_2$, the ideal $T^{(n-2)}$ is also spanned by the elements $[b_1, b_2, \dots , b_{n-2}] a$ $(a, b_j \in A)$. It follows that the $R$-module $[T^{(n-2)}, A, A] $ is spanned over $R$ by the elements $[u a_1, a_2, a_3] $ where $u = [b_1, b_2, \dots , b_{n-2}]$, $a_i, b_j \in A$.

We have
\[
[u a_1, a_2, a_3] = u [a_1, a_2, a_3] + [u, a_2][a_1, a_3] + [u, a_3][a_1, a_2] + [u, a_2, a_3] a_1 .
\]
It is clear that $ [u, a_2, a_3] \in T^{(n)}$; by Lemma \ref{lemma[ua][aa]3u[aaa]} ii), $u [a_1, a_2, a_3] \in T^{(n)}$ (recall that $3$ is invertible in $R$) and, by  Lemma \ref{lemma[ua][aa]3u[aaa]} i), we have
\[
[u, a_2][a_1, a_3] + [u, a_3][a_1, a_2] = - \bigl( [u, a_2] [a_3, a_1] + [u, a_3] [a_2, a_1] \bigr) \in T^{(n)}.
\]
Hence, for all $u = [b_1, b_2, \dots , b_{n-2}]$ $(b_i \in A)$ and all $a_i \in A$, we have $[u a_1, a_2, a_3] \in T^{(n)}$. It follows that $[T^{(n-2)}, A, A] \subset T^{(n)}$, as required.

The proof of Proposition \ref{A[Tn-2AA]A=Tn}  is completed.
\end{proof}


\section{Proofs of Theorem \ref{generatorsW} and of Proposition \ref{proposition3vinI'} }

\begin{proof}[Proof of Theorem \ref{generatorsW}]

Let $I$ be the two-sided ideal of $A$ generated by the elements of the forms (\ref{generator[sxx]})--(\ref{generators[xx][xx]}). We need to prove that $W=I$.

\medskip
First we prove that $I \subseteq W$. It suffices to check that all elements of the forms (\ref{generator[sxx]})--(\ref{generators[xx][xx]}) belong to $W$.

We need the following.

\begin{lemma}
\label{lemmau[aaa]inW}
For all $u \in U$, $a_i \in A$, we have  $u [a_1, a_2, a_3] \in W$.
\end{lemma}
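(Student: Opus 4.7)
The strategy is to apply the expansion of a double commutator $[pq, r, s]$ from the Preliminaries twice --- once with $pq = u a_1$ and once with $pq = u a_2$ --- to produce, modulo $W$, a Leibniz-type relation; one further index permutation then yields the conclusion. The hypothesis $\frac{1}{3} \in R$ is not needed.

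First I would apply the identity
\[
[u a_1, a_2, a_3] \ = \ u [a_1, a_2, a_3] + [u, a_2][a_1, a_3] + [u, a_3][a_1, a_2] + [u, a_2, a_3] \, a_1
\]
from the Preliminaries. Since $U$ is a two-sided ideal, $u a_1 \in U$, so the left-hand side lies in $W$; the last term $[u, a_2, a_3] \, a_1$ lies in $W$ directly from the definition of $W$. Rearranging gives
\[
u [a_1, a_2, a_3] \ \equiv \ - [u, a_2][a_1, a_3] - [u, a_3][a_1, a_2] \pmod{W}. \qquad (\star)
\]

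A completely analogous argument, this time expanding $[u a_2, a_1, a_3]$ and then using $[a_2, a_1, a_3] = -[a_1, a_2, a_3]$ to normalise the $u$-term, yields
\[
u [a_1, a_2, a_3] \ \equiv \ [u, a_1][a_2, a_3] - [u, a_3][a_1, a_2] \pmod{W}. \qquad (\star\star)
\]
Subtracting $(\star)$ from $(\star\star)$ eliminates $u[a_1, a_2, a_3]$ and produces the key ``Leibniz-type relation modulo $W$''
\[
[u, a_1][a_2, a_3] + [u, a_2][a_1, a_3] \ \equiv \ 0 \pmod{W},
\]
valid for every $u \in U$ and all $a_1, a_2, a_3 \in A$.

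To finish, I would apply this relation to the cyclically relabelled triple $(a_2, a_3, a_1)$ in place of $(a_1, a_2, a_3)$; using $[a_3, a_1] = -[a_1, a_3]$ and $[a_2, a_1] = -[a_1, a_2]$, it becomes $[u, a_2][a_1, a_3] + [u, a_3][a_1, a_2] \equiv 0 \pmod{W}$. Substituting into $(\star)$ immediately gives $u[a_1, a_2, a_3] \in W$, as required. I expect the main delicate point to be keeping careful track of signs in the two expansions; once the Leibniz-type relation is in hand, the rest is a single application of the bilinearity of the commutator.
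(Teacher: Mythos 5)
Your proof is correct, and it takes a genuinely different route from the paper's. The paper argues in two lines: for any $v\in U$ one has $\bigl[v,[a_1,a_2]\bigr]=[v,a_1,a_2]-[v,a_2,a_1]\in W$, so applying the single-commutator Leibniz rule to $\bigl[ua_3,[a_1,a_2]\bigr]=\bigl[u,[a_1,a_2]\bigr]a_3+u\bigl[a_3,[a_1,a_2]\bigr]$ forces $u\bigl[a_3,[a_1,a_2]\bigr]=-u[a_1,a_2,a_3]$ into $W$. You instead expand the double commutator $[ua_i,a_j,a_3]$ twice and antisymmetrize; all your sign computations check out, and the elimination of the term $u[a_1,a_2,a_3]$ in the subtraction $(\star\star)-(\star)$ is exactly right, as is the final cyclic relabelling. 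What your approach buys is the intermediate relation $[u,a_1][a_2,a_3]+[u,a_2][a_1,a_3]\equiv 0 \pmod{W}$ for all $u\in U$, $a_i\in A$, obtained \emph{before} (rather than as a consequence of) the lemma; in the paper this relation is only established afterwards, for generators of the form (\ref{generator[sx][xx]}), using Lemma \ref{lemmau[aaa]inW} together with the identity for $[sx_3,x_1,x_2]$. So your argument is somewhat longer than the paper's two-line proof but would let one streamline the subsequent verification that the elements (\ref{generator[sx][xx]}) lie in $W$. You are also right that $\tfrac{1}{3}\in R$ plays no role here, consistent with the fact that Theorem \ref{generatorsW} is stated for arbitrary $R$.
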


\begin{proof}
We have
\[
\bigl[ u a_3, [a_1, a_2] \bigr] = \bigl[ u, [a_1, a_2] \bigr] a_3 + u \bigl[ a_3, [a_1, a_2] \bigr] .
\]
On the other hand, for all $v \in U$, $a_i \in A$,
\[
\bigl[ v, [a_1, a_2] \bigr] = [v, a_1, a_2] - [v, a_2, a_1] \in W.
\]
In particular, $\bigl[ u a_3, [a_1, a_2] \bigr] \in W$ and $\bigl[ u, [a_1, a_2] \bigr] \in W$. It follows that
$u \bigl[ a_3, [a_1, a_2] \bigr]  \in W$ so
\[
u [a_1, a_2, a_3] = - u \bigl[ a_3, [a_1, a_2] \bigr]  \in W ,
\]
as required.
\end{proof}

\begin{corollary}
\label{corollaryu[aa][aa]inW}
For all $u \in U$, $a_i \in A$, we have  $u \bigl( [a_1, a_2] [a_3, a_4]  + [a_1, a_3] [ a_2, a_4] \bigr) \in W$.
\end{corollary}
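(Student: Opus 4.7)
The plan is to reduce the claim to Lemma \ref{lemmau[aaa]inW}, by exhibiting the expression $[a_1,a_2][a_3,a_4] + [a_1,a_3][a_2,a_4]$ as an $R$-linear combination of terms of the shape $u'[b_1,b_2,b_3]$ (with $u' \in U$) together with elements already in $W$. Once this is done the corollary follows immediately, since $W$ is a two-sided ideal and Lemma \ref{lemmau[aaa]inW} handles each triple-commutator summand.

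Concretely, I would begin with the Leibniz-type identity of the Preliminaries,
\[
[a_1 a_2, a_3, a_4] = a_1[a_2,a_3,a_4] + [a_1,a_3][a_2,a_4] + [a_1,a_4][a_2,a_3] + [a_1,a_3,a_4]\,a_2,
\]
rearrange it as
\[
[a_1,a_3][a_2,a_4] + [a_1,a_4][a_2,a_3] = [a_1 a_2, a_3, a_4] - a_1[a_2,a_3,a_4] - [a_1,a_3,a_4]\,a_2,
\]
and multiply on the left by $u$:
\[
u\bigl([a_1,a_3][a_2,a_4] + [a_1,a_4][a_2,a_3]\bigr) = u[a_1 a_2, a_3, a_4] - (u a_1)[a_2,a_3,a_4] - u[a_1,a_3,a_4]\,a_2.
\]
Since $U$ is a two-sided ideal we have $u a_1 \in U$, so by Lemma \ref{lemmau[aaa]inW} each of $u[a_1 a_2, a_3, a_4]$, $(u a_1)[a_2,a_3,a_4]$, and $u[a_1,a_3,a_4]$ lies in $W$; the last remains in $W$ after right multiplication by $a_2$. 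Hence $u\bigl([a_1,a_3][a_2,a_4] + [a_1,a_4][a_2,a_3]\bigr) \in W$.

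To finish, I would apply the inclusion just obtained with $a_2$ and $a_3$ swapped, which gives
\[
u\bigl([a_1,a_2][a_3,a_4] - [a_1,a_4][a_2,a_3]\bigr) \in W,
\]
and then add the two inclusions; the $[a_1,a_4][a_2,a_3]$ contributions cancel and leave the desired $u\bigl([a_1,a_2][a_3,a_4] + [a_1,a_3][a_2,a_4]\bigr) \in W$. The entire argument is a short manipulation: the only genuine step is spotting that the Hall-type combination $[a_1,a_2][a_3,a_4] + [a_1,a_3][a_2,a_4]$ can be extracted from two expansions of $[a_1 a_2, a_3, a_4]$ (with $a_2, a_3$ interchanged) via a cancellation, modulo triple commutators, so there is no substantive obstacle beyond this bookkeeping.
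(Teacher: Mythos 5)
Your proof is correct and follows essentially the same route as the paper: both expand a triple commutator of a product via the Leibniz identity from the Preliminaries and absorb every triple-commutator term using Lemma \ref{lemmau[aaa]inW} together with the ideal properties of $U$ and $W$. The only difference is that the paper expands $u[a_1a_4,a_2,a_3]$, which yields the target combination $-u\bigl([a_1,a_2][a_3,a_4]+[a_1,a_3][a_2,a_4]\bigr)$ in a single step, whereas your choice of $u[a_1a_2,a_3,a_4]$ requires a second expansion with $a_2$ and $a_3$ interchanged and a cancellation.
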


\begin{proof}
We have
\[
u [a_1 a_4, a_2, a_3] = u \bigl( [a_1, a_2, a_3] a_4 + [a_1, a_2] [a_4, a_3] + [a_1, a_3] [a_4, a_2 ] + a_1 [a_4, a_2, a_3] \bigr) .
\]
By Lemma \ref{lemmau[aaa]inW}, $u [a_1 a_4, a_2, a_3] \in W$, $ u  [a_1, a_2, a_3] \in W$ and $u a_1 [a_4, a_2, a_3] \in W$. Hence,
\[
u \bigl( [a_1, a_2] [a_4, a_3] + [a_1, a_3] [a_4, a_2 ] \bigr) \in W.
\]
It follows that $u \bigl( [a_1, a_2] [a_3, a_4]  + [a_1, a_3] [ a_2, a_4] \bigr) = - u \bigl( [a_1, a_2] [a_4, a_3] + [a_1, a_3] [a_4, a_2 ] \bigr) \in W$, as required.
\end{proof}

Now we are in a position to prove that all elements of the forms (\ref{generator[sxx]})--(\ref{generators[xx][xx]}) belong to $W$. It is clear that each element $[s, x_1, x_2]$ $(s \in S, x_i \in X)$ of the form (\ref{generator[sxx]}) belongs to $W$. By Lemma \ref{lemmau[aaa]inW}, the elements of the forms (\ref{generators[xxx]}) and (\ref{generator[sx][xxx]}) belong to $W$ and, by Corollary  \ref{corollaryu[aa][aa]inW}, so do the elements of the form (\ref{generators[xx][xx]}). Finally, we have
\[
[s x_3 , x_1, x_2] = [s , x_1, x_2] x_3 + [s, x_1] [x_3, x_2] + [s, x_2][x_3, x_1] + s [x_3, x_1, x_2]
\]
so
\[
[s, x_1] [x_2, x_3] + [s, x_2][x_1, x_3] = - [s, x_1] [x_3, x_2] - [s, x_2][x_3, x_1] = - [s x_3 , x_1, x_2]  + [s , x_1, x_2] x_3 +  s [x_3, x_1, x_2] .
\]
Since $[s x_3 , x_1, x_2] \in W$, $[s , x_1, x_2] \in W$ and $s [x_3, x_1, x_2] \in W$, each element $[s, x_1] [x_2, x_3] + [s, x_2][x_1, x_3] $ $(s \in S, x_i \in X)$ of the form (\ref{generator[sx][xx]}) belongs to $W$.

Thus, all elements of the forms (\ref{generator[sxx]})--(\ref{generators[xx][xx]}) belong to $W$ so $I \subseteq W$.

\medskip
Now we check that $W \subseteq I$.

Let $M$ be the set of all products of elements of $X$,
\[
M = \{ x_1 \dots x_k \mid k \ge 0, x_i \in X \} .
\]
Note that the ideal $W$ is generated (as a two-sided ideal in $A$) by all commutators of the form
\[
[b_1 s b_2, b_3, x] \qquad (s \in S, b_i \in M, x \in X ) .
\]
Indeed, since each element of $U$ is a sum of elements of the form $a_1 s a_2$ where $s \in S$, $a_i \in A$, the ideal $W$ is generated by the commutators $[a_1 s a_2, a_3, a_4]$ $(s \in S, a_i \in A)$. Further, each element of $A$ is an $R$-linear combination of element of $M$ so the ideal $W$ is generated by all commutators of the form $[b_1 s b_2, b_3, b_4]$ $(s \in S, b_i \in M)$. Finally,  if $b_4 = y_1 \dots y_k$ where $k \ge 1$, $y_i \in X$ then
\[
[b_1 s b_2, b_3, b_4] = [b_1 s b_2, b_3, y_1 \dots y_k] = \sum_{i=1}^k y_1 \dots y_{i-1} [b_1 s b_2, b_3, y_i] y_{i+1} \dots y_k .
\]
It follows that $W$ is generated by the commutators $[b_1 s b_2, b_3, x] $ $(s \in S, b_i \in M, x \in X)$, as claimed.

Thus, to check that $W \subseteq I$ it suffices to prove that
\begin{equation}
\label{[bsbbx]inI}
[b_1 s b_2, b_3, x] \equiv 0 \pmod{I} \qquad ( s \in S, b_i \in M, x \in X) .
\end{equation}
Further, we have
\[
[b_1 s b_2, b_3, x] = [ s b_1 b_2, b_3, x] - \bigl[ [s, b_1] b_2, b_3, x] \bigr] .
\]
so to prove (\ref{[bsbbx]inI})  is suffices to check that
\begin{equation}
\label{[sddx]}
[ s d_1, d_2, x] \equiv 0 \pmod{I}  \qquad (s \in S, d_i \in M, x \in X)
\end{equation}
and
\begin{equation}
\label{[[sb]bbx]}
\bigl[ [s, b_1] b_2, b_3, x] \bigr] \equiv 0 \pmod{I} \qquad (s \in S, b_i \in M, x \in X).
\end{equation}

First we check that (\ref{[[sb]bbx]}) holds. We need the following observations and lemmas.

Since $[s, x_1, x_2] \in I$, we have $[s, x_1] x_2 \equiv x_2 [s, x_1] \pmod{I}$ for all $s \in S$, $x_i \in X$ so
\begin{equation}
\label{[sx]center}
[s, x] \ a \equiv a \ [s, x] \pmod{I}
\end{equation}
for all $a \in A$, $s \in S$, $x \in X$. Further,
\begin{equation*}
\bigl[ [s, x_1] [x_2, x_3] , x_4 \bigr] = [s, x_1, x_4] [x_2, x_3] + [s, x_1] [x_2, x_3, x_4] \in I
\end{equation*}
for all $s \in S$, $x_i \in X$. It follows that
\begin{equation}
\label{[sx][xx]center}
\bigl( [s, x_1] [x_2, x_3] \bigr) a \equiv a \bigl( [s, x_1] [x_2, x_3] \bigr)  \pmod{I}
\end{equation}
for all $a \in A$, $s \in S$, $x_i \in X$.

\begin{lemma}
\label{lemma[sx][xx][xx]inI}
For all $s \in S$, $x_i \in X$, we have $[s, x_1] \bigl( [x_2, x_3] [x_4, x_5] + [x_2, x_4][x_3, x_5] \bigr) \equiv 0 \pmod{I}$.
\end{lemma}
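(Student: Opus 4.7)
The plan is to reduce each of the two summands of $[s, x_1]\bigl([x_2, x_3][x_4, x_5] + [x_2, x_4][x_3, x_5]\bigr)$ modulo $I$ to the same commutator product $[s, x_2][x_3, x_4][x_1, x_5]$, but with opposite signs, so that the sum vanishes. The argument will rely on two facts that are already at hand: the centrality congruence $(\ref{[sx]center})$, which says that $[s, x]$ commutes modulo $I$ with every element of $A$ and so lets me freely reorder an occurrence of $[s, x]$ with any adjacent factor, and the defining congruence $(\ref{generator[sx][xx]})$, which I will use in the convenient form $[s, y][z, w] \equiv -[s, z][y, w] \pmod{I}$ for all $y, z, w \in X$, allowing me to transpose the two ``inner'' indices $y$ and $z$ at the cost of a sign.

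For the first summand $[s, x_1][x_2, x_3][x_4, x_5]$ the plan is: first commute $[s, x_1]$ past $[x_2, x_3]$ using $(\ref{[sx]center})$; then apply $(\ref{generator[sx][xx]})$ to the newly adjacent pair $[s, x_1][x_4, x_5]$ to obtain $-[s, x_4][x_1, x_5]$; then use $(\ref{[sx]center})$ again to bring $[s, x_4]$ back to the front, past $[x_2, x_3]$; and finally apply $(\ref{generator[sx][xx]})$ to the front pair $[s, x_4][x_2, x_3]$ and use the antisymmetry $[x_4, x_3] = -[x_3, x_4]$. The net outcome is that the first summand is congruent to $-[s, x_2][x_3, x_4][x_1, x_5]$ modulo $I$.

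An entirely parallel sequence of steps applied to the second summand $[s, x_1][x_2, x_4][x_3, x_5]$ --- swapping $x_1 \leftrightarrow x_3$ first and then $x_3 \leftrightarrow x_2$ --- yields $+[s, x_2][x_3, x_4][x_1, x_5]$ modulo $I$. Adding the two congruences, the right-hand sides cancel, which is the desired conclusion.

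The only delicate point is correctly tracking the signs accumulated through two applications of $(\ref{generator[sx][xx]})$ and one use of $[x_i, x_j] = -[x_j, x_i]$ in each reduction; once the rewriting chain is laid out, this is a routine check, and I do not anticipate any conceptual obstacle beyond this bookkeeping.
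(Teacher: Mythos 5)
Your proposal is correct and rests on exactly the same two ingredients as the paper's own proof, namely the centrality congruence (\ref{[sx]center}) and the relations of the form (\ref{generator[sx][xx]}) used as $[s,y][z,w] \equiv -[s,z][y,w] \pmod{I}$; the sign bookkeeping checks out, with the first summand reducing to $-[s,x_2][x_3,x_4][x_1,x_5]$ and the second to $+[s,x_2][x_3,x_4][x_1,x_5]$. The only difference from the paper is organizational: the paper rewrites the whole expression at once as a sum of three left/right multiples of generators of type (\ref{generator[sx][xx]}), whereas you reduce each summand separately to a common normal form, so this is essentially the same approach.
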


\begin{proof}
By (\ref{[sx]center}), we have
\begin{align*}
& [s, x_1] \bigl( [x_2, x_3] [x_4, x_5] + [x_2, x_4][x_3, x_5] \bigr) = - [s, x_1] \bigl( [x_3, x_2] [x_4, x_5] + [x_4, x_2][x_3, x_5] \bigr)
\\
\equiv \ & - \bigl( [s, x_1][x_3, x_2] + [s, x_3][x_1, x_2] \bigr) [x_4, x_5] + [x_1, x_2] \bigl( [s, x_3] [ x_4, x_5] + [s, x_4] [x_3, x_5] \bigr)
\\
- \ & \bigl( [s, x_4] [x_1, x_2] + [s, x_1] [x_4, x_2] \bigr) [x_3, x_5] \pmod{I}.
\end{align*}
Note that the element $[s, x_1][x_3, x_2] + [s, x_3][x_1, x_2]$ is of the form (\ref{generator[sx][xx]}) so
\[
[s, x_1][x_3, x_2] + [s, x_3][x_1, x_2] \equiv 0 \pmod{I}.
\]
Similarly,
\[
[s, x_3] [ x_4, x_5] + [s, x_4] [x_3, x_5]  \equiv 0 \pmod{I}
\]
and
\[
[s, x_4] [x_1, x_2] + [s, x_1] [x_4, x_2] \equiv 0 \pmod{I}.
\]
The result follows.
\end{proof}

\begin{lemma}
\label{lemma[sx][bxx]}
For all $s \in S$, $b \in M$, $x_i \in X$, we have $[s, x_1] [b, x_2, x_3] \equiv 0 \pmod{I}$.
\end{lemma}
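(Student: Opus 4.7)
The plan is to prove Lemma~\ref{lemma[sx][bxx]} by induction on the length $k$ of $b = y_1 \dots y_k \in M$ (with $y_i \in X$), exploiting the two ``centrality'' congruences~(\ref{[sx]center}) and~(\ref{[sx][xx]center}) established just before this lemma, together with Lemma~\ref{lemma[sx][xx][xx]inI}.

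For the base cases, if $k = 0$ then $b = 1$ and $[b, x_2, x_3] = 0$; if $k = 1$ then $b = y \in X$ and $[s,x_1][y, x_2, x_3]$ is literally a generator of the form~(\ref{generator[sx][xxx]}), so it belongs to $I$. For the inductive step with $k \ge 2$, I write $b = b' y$ with $b' = y_1 \dots y_{k-1} \in M$ of length $k-1$, and expand using the identity
\[
[b' y, x_2, x_3] = b'[y, x_2, x_3] + [b', x_2][y, x_3] + [b', x_3][y, x_2] + [b', x_2, x_3] y
\]
from the Preliminaries. After multiplying by $[s, x_1]$ on the left and pushing $[s, x_1]$ through $b'$ via repeated use of~(\ref{[sx]center}), the first term becomes $\equiv b'[s,x_1][y,x_2,x_3] \equiv 0 \pmod{I}$ (a form~(\ref{generator[sx][xxx]}) element times $b'$), and the last term becomes $\equiv 0 \pmod I$ by the inductive hypothesis applied to $b'$. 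This reduces the problem to proving
\[
[s, x_1]\bigl( [b', x_2][y, x_3] + [b', x_3][y, x_2] \bigr) \equiv 0 \pmod{I}.
\]

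To handle this remaining piece, I would first establish the auxiliary fact
\[
[s, x_1]\bigl( [z, x_2][y, x_3] + [z, x_3][y, x_2] \bigr) \equiv 0 \pmod{I} \qquad (z, y \in X).
\]
This follows by applying Lemma~\ref{lemma[sx][xx][xx]inI} twice, with the argument tuples $(z, x_2, y, x_3)$ and $(z, x_3, y, x_2)$ substituted for $(x_2, x_3, x_4, x_5)$, then adding: the two ``unwanted'' summands $[s,x_1][z,y][x_2,x_3]$ and $[s,x_1][z,y][x_3,x_2]$ cancel because $[x_2,x_3]+[x_3,x_2] = 0$. Next, expanding $[b', x_j] = \sum_{i=1}^{k-1} y_1 \dots y_{i-1}[y_i, x_j] y_{i+1} \dots y_{k-1}$ via the Preliminaries identity, I use~(\ref{[sx]center}) to move $[s, x_1]$ past $y_1 \dots y_{i-1}$ and then~(\ref{[sx][xx]center}) to move the product $[s, x_1][y_i, x_j]$ past $y_{i+1} \dots y_{k-1}$, collecting everything mod $I$ in the form
\[
\sum_{i=1}^{k-1} y_1 \dots \widehat{y_i} \dots y_{k-1} \cdot [s, x_1]\bigl( [y_i, x_2][y, x_3] + [y_i, x_3][y, x_2] \bigr),
\]
where $\widehat{y_i}$ means $y_i$ is omitted. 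Each factor after the monomial vanishes mod $I$ by the auxiliary fact, completing the inductive step.

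The only slightly delicate point is the bookkeeping in the last paragraph: we need $[s,x_1]$ to be central mod $I$ to bring it through $y_1 \dots y_{i-1}$, and we need the product $[s,x_1][y_i, x_j]$ to be central mod $I$ to bring it through $y_{i+1} \dots y_{k-1}$; both are exactly what~(\ref{[sx]center}) and~(\ref{[sx][xx]center}) supply. No use of Proposition~\ref{proposition3vinI'} or of the invertibility of $3$ is required — this lemma holds over any $R$.
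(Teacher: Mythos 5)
Your proof is correct and is essentially the paper's argument: unrolling your induction on the length of $b$ reproduces the paper's one-shot expansion of $[y_1\dots y_k,x_2,x_3]$ via the double-sum Leibniz identity, after which both arguments use the same ingredients --- the centrality congruences (\ref{[sx]center}) and (\ref{[sx][xx]center}), the generators of form (\ref{generator[sx][xxx]}), and Lemma \ref{lemma[sx][xx][xx]inI} for the cross terms. Your explicit two-instance derivation of the symmetrized congruence $[s,x_1]\bigl([z,x_2][y,x_3]+[z,x_3][y,x_2]\bigr)\equiv 0 \pmod{I}$ is a welcome clarification of a step the paper leaves implicit, and your remark that no invertibility of $3$ is needed is consistent with the paper's hypotheses.
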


\begin{proof}
Suppose that $b = y_1 \dots y_k$ $(k \ge 1, y_i \in X)$. We have
\begin{align*}
& [s, x_1] [b, x_2, x_3] = [s, x_1] [y_1 \dots y_k, x_2, x_3] = \sum_{i=1}^k [s, x_1] y_1 \dots y_{i-1} [y_i, x_2, x_3] y_{i+1} \dots y_k
\\
+ \ & \sum_{1 \le i < i' \le k} [s, x_1] y_1 \dots y_{i-1} \bigl( [y_i, x_2] y_{i+1} \dots y_{i'-1} [y_{i'} , x_3] +  [y_i, x_3] y_{i+1} \dots y_{i'-1} [y_{i'} , x_2] \bigr) y_{i'+1} \dots y_k .
\end{align*}
Note that, by (\ref{[sx]center}),
\[
[s, x_1] y_1 \dots y_{i-1} [y_i, x_2, x_3] y_{i+1} \dots y_k \equiv  y_1 \dots y_{i-1}[s, x_1]  [y_i, x_2, x_3] y_{i+1} \dots y_k \pmod{I}
\]
and, by (\ref{[sx]center}) and (\ref{[sx][xx]center}),
\begin{align*}
& [s, x_1] y_1 \dots y_{i-1} \bigl( [y_i, x_2] y_{i+1} \dots y_{i'-1} [y_{i'} , x_3] +  [y_i, x_3] y_{i+1} \dots y_{i'-1} [y_{i'} , x_2] \bigr) y_{i'+1} \dots y_k
\\
\equiv \ &  y_1 \dots y_{i-1} y_{i+1} \dots y_{i'-1} [s, x_1] \bigl( [y_i, x_2]  [y_{i'} , x_3] +  [y_i, x_3]  [y_{i'} , x_2] \bigr) y_{i'+1} \dots y_k \pmod{I} .
\end{align*}
Since the product $[s, x_1]  [y_i, x_2, x_3]$ is of the form (\ref{generator[sx][xxx]}), we have $[s, x_1]  [y_i, x_2, x_3] \equiv 0 \pmod{I}$; by Lemma \ref{lemma[sx][xx][xx]inI},
\[
[s, x_1] \bigl( [y_i, x_2]  [y_{i'} , x_3] +  [y_i, x_3]  [y_{i'} , x_2] \bigr) \equiv 0 \pmod{I} .
\]
Therefore, $[s, x_1] [b, x_2, x_3] \equiv 0 \pmod{I}$ for all $s \in S$, $b \in M$, $x_i \in X$, as required.
\end{proof}

\begin{corollary}
\label{corollary[sx][bx]a}
For all $s \in S$, $a \in A$, $b \in M$, $x_j \in X$, we have
\[
[s, x_1] [b, x_2, ] a \equiv a [s, x_1] [b, x_2] \pmod{I} .
\]
\end{corollary}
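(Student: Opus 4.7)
[bx]a}]
The plan is to show that the element $[s, x_1][b, x_2]$ is central modulo $I$, first against generators $x \in X$ and then, by multiplicativity and $R$-linearity, against arbitrary $a \in A$. Since every $a \in A$ is an $R$-linear combination of words $y_1 \cdots y_m$ with $y_j \in X$, a straightforward induction on the length of such words reduces the claim to the single-generator case $a = x \in X$.

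So the key step is to verify
\[
\bigl[\,[s, x_1][b, x_2],\, x\,\bigr] \equiv 0 \pmod{I} \qquad (s \in S,\ b \in M,\ x_1, x_2, x \in X).
\]
For this I would apply the Leibniz rule $[pq, r] = p[q, r] + [p, r]q$ to obtain
\[
\bigl[\,[s, x_1][b, x_2],\, x\,\bigr] \;=\; [s, x_1]\,[b, x_2, x] \;+\; [s, x_1, x]\,[b, x_2].
\]
The first summand lies in $I$ by Lemma \ref{lemma[sx][bxx]} (applied with $x_3 = x$). The second summand lies in $I$ because by (\ref{[sx]center}) the commutator $[s, x_1]$ is central modulo $I$, hence $[s, x_1, x] \in I$, and so $[s, x_1, x][b, x_2] \in I$ as well.

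Once the single-generator case is in hand, I would deduce the general statement by telescoping. Writing $a = y_1 \cdots y_m$ with $y_j \in X$, and letting $c = [s, x_1][b, x_2]$, a repeated application of $c \, y_j \equiv y_j \, c \pmod{I}$ gives
\[
c \, y_1 \cdots y_m \equiv y_1 \, c \, y_2 \cdots y_m \equiv \cdots \equiv y_1 \cdots y_m \, c \pmod{I}.
\]
Extending $R$-linearly over all such words then yields the corollary. The only potentially delicate point is ensuring that (\ref{[sx]center}) is available for all $a \in A$ (not just $a \in X$), but this is exactly its stated form, so no further work is needed and there is really no serious obstacle: the proof is a direct assembly of the Leibniz rule, Lemma \ref{lemma[sx][bxx]}, and the centrality fact (\ref{[sx]center}).
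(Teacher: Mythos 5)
Your proposal is correct and follows essentially the same route as the paper: reduce to the case $a = x \in X$, expand $\bigl[[s,x_1][b,x_2],x\bigr]$ by the Leibniz rule into $[s,x_1,x][b,x_2] + [s,x_1][b,x_2,x]$, and kill the two summands using the fact that $[s,x_1,x]$ lies in $I$ (it is a generator of the form (\ref{generator[sxx]}), which is also why (\ref{[sx]center}) holds) and Lemma \ref{lemma[sx][bxx]} respectively. The only cosmetic difference is that you route the first summand through (\ref{[sx]center}) rather than citing the generator (\ref{generator[sxx]}) directly, which amounts to the same thing.
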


\begin{proof} It suffices to check that, for all $s \in S$, $b \in M$, $x_i \in X$,
\[
[s, x_1] [b, x_2, ] x_3 \equiv x_3 [s, x_1] [b, x_2] \pmod{I} ,
\]
that is,
\[
\bigl[ [s, x_1] [b, x_2, ] , x_3 \bigr] \equiv 0 \pmod{I}.
\]
We have
\[
\bigl[ [s, x_1] [b, x_2, ] , x_3 \bigr] = [s, x_1, x_3] [b, x_2] + [s, x_1] [b, x_2, x_3].
\]
Here $[s, x_1, x_3] \equiv 0 \pmod{I}$ because it is an element of the form (\ref{generator[sxx]}) and $[s, x_1] [b, x_2, x_3] \equiv 0 \pmod{I}$ by Lemma \ref{lemma[sx][bxx]}. The result follows.
\end{proof}

Now we are in a position to check that (\ref{[[sb]bbx]}) holds.
\begin{lemma}
For all $s \in S$, $b_i \in M$, $x \in X$, we have $\bigl[ [s, b_1] b_2, b_3, x \bigr] \equiv 0 \pmod{I}$.
\end{lemma}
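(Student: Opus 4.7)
The plan is to reduce the statement modulo $I$ to a more tractable form using the centrality observations, and then to prove a small auxiliary claim by a Jacobi rearrangement.

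First, I would use (\ref{[sx]center}) to rewrite $[s,b_1]b_2$ modulo $I$. Writing $b_1 = y_1\cdots y_k$ with $y_i\in X$, Leibniz gives $[s,b_1]b_2 = \sum_{i=1}^k y_1\cdots y_{i-1}\,[s,y_i]\,y_{i+1}\cdots y_k\,b_2$. Since each $[s,y_i]$ is central modulo $I$ by (\ref{[sx]center}), this sum is congruent to $\sum_i [s,y_i]\,d_i$ modulo $I$, where $d_i = y_1\cdots\widehat{y_i}\cdots y_k\,b_2 \in M$. Because $I$ is a two-sided ideal, it follows that
\[
\bigl[[s,b_1]b_2,\,b_3,\,x\bigr] \equiv \sum_i \bigl[[s,y_i]d_i,\,b_3,\,x\bigr] \pmod{I},
\]
and so it suffices to check each summand individually.

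Next, I would apply the Leibniz-type identity for $[a_1 a_2, c_1, c_2]$ from the Preliminaries (with $a_1=[s,y_i]$, $a_2=d_i$, $c_1=b_3$, $c_2=x$) to obtain
\[
\bigl[[s,y_i]d_i,b_3,x\bigr] = [s,y_i][d_i,b_3,x] + [s,y_i,b_3][d_i,x] + [s,y_i,x][d_i,b_3] + [s,y_i,b_3,x]\,d_i.
\]
Each generator $[s,y_i,z]$ with $z\in X$ lies in $I$ by type (\ref{generator[sxx]}); expanding $b_3$ letter by letter with the Leibniz rule then shows $[s,y_i,b_3]\in I$, and, by closure of $I$ under bracketing with $x$, also $[s,y_i,b_3,x]\in I$. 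So the last three terms already lie in $I$, and the problem reduces to the following claim: for $s\in S$, $x_1, y\in X$, and $c,b\in M$, one has $[s,x_1][c,b,y]\in I$.

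I would prove this claim by the Jacobi rearrangement $[c,b,y]=[c,y,b]-[b,y,c]$, handling each piece symmetrically. For $[s,x_1][c,y,b]$, expand $[c,y,b]=[c,y]b-b[c,y]$ and use (\ref{[sx]center}) to commute $[s,x_1]$ past $b$ modulo $I$, yielding
\[
[s,x_1][c,y,b] \equiv [s,x_1][c,y]\,b - b\,[s,x_1][c,y] = \bigl[[s,x_1][c,y],\,b\bigr] \pmod{I}.
\]
By Corollary~\ref{corollary[sx][bx]a}, $[s,x_1][c,y]$ is central modulo $I$, so this commutator lies in $I$; the term $[s,x_1][b,y,c]$ is treated identically with $b$ and $c$ interchanged, proving the claim. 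Applying the claim with $(x_1,y,c,b)=(y_i,x,d_i,b_3)$ then gives $[s,y_i][d_i,b_3,x]\in I$, completing the proof. The main obstacle is the claim itself: because $b$ sits in the middle (non-terminal) slot of $[c,b,y]$, neither Lemma~\ref{lemma[sx][bxx]} nor the generator (\ref{generator[sx][xxx]}) applies directly, and the Jacobi swap is the key trick---it rotates $b$ into a terminal position, where the central-mod-$I$ property of $[s,x_1][c,y]$ from Corollary~\ref{corollary[sx][bx]a} can be exploited.
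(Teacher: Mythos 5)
Your proof is correct, and its overall skeleton coincides with the paper's: expand $b_1$ letter by letter, use the centrality (\ref{[sx]center}) of the elements $[s,y_i]$ modulo $I$ to reduce the whole statement to showing $[s,y_i][d_i,b_3,x]\in I$ with $d_i,b_3\in M$, $x\in X$. Where you genuinely diverge is in how you finish. The paper invokes Lemma \ref{L2} to rewrite the inner commutator $[d_1,d_2]$ as a linear combination of commutators $[d,x']$ with $d\in M$, $x'\in X$, so that $[d_1,d_2,x_2]$ collapses to a combination of terms $[d,x',x_2]$ and Lemma \ref{lemma[sx][bxx]} applies directly. You instead rotate the word sitting in the middle slot out of the way via the Jacobi identity, $[c,b,y]=[c,y,b]-[b,y,c]$, and then exploit the centrality modulo $I$ of the products $[s,x_1][c,y]$ (Corollary \ref{corollary[sx][bx]a}) to show each resulting piece is a commutator of a central-mod-$I$ element and hence lies in $I$. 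Both finishing arguments rest only on results already established at this point (Corollary \ref{corollary[sx][bx]a} is itself a consequence of Lemma \ref{lemma[sx][bxx]}, so there is no circularity). The paper's route is a little shorter because Lemma \ref{L2} does the collapsing in one stroke; yours avoids Lemma \ref{L2} here entirely and makes more systematic use of the ``central modulo $I$'' elements, which arguably makes clearer why the middle-slot obstruction is harmless.
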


\begin{proof}
Let $b_1 = y_1 \dots y_k$ $(k \ge 1, y_j \in X)$. Then
\[
\bigl[ [s, b_1] b_2, b_3, x \bigr] = \bigl[ [s, y_1 \dots y_k] b_2, b_3, x \bigr] = \sum_{i=1}^k [ y_1 \dots y_{i-1} [s, y_i] y_{i+1} \dots y_k b_2, b_3, x ] .
\]
By (\ref{[sx]center}),
\[
[ y_1 \dots y_{i-1} [s, y_i] y_{i+1} \dots y_k b_2, b_3, x ] \equiv  [s, y_i]  [ y_1 \dots y_{i-1}y_{i+1} \dots y_k b_2, b_3, x ] \pmod{I}
\]
so to prove the lemma it suffices to check that, for all $s \in S$, $d_i \in M$, $x_i \in X$,
\begin{equation}
\label{[sx][ddx]}
[s, x_1] [d_1, d_2, x_2] \equiv 0 \pmod{I} .
\end{equation}
By Lemma \ref{L2}, each commutator $[d_1, d_2, x]$ $(d_i \in M, x \in X)$ is a linear combination of commutators of the form $[d, x_1, x_2]$ where $d \in M$, $x_i \in X$. It follows that to prove (\ref{[sx][ddx]}) it suffices to check that, for all $s \in S$, $d \in M$, $x_i \in X$, we have $[s, x_1] [d, x_2, x_3] \equiv 0 \pmod{I}$ and this congruence holds by Lemma \ref{lemma[sx][bxx]}.
\end{proof}

Thus, (\ref{[[sb]bbx]}) holds, as required.

\medskip
Now we check that (\ref{[sddx]}) holds. We have
\begin{equation}
\label{[sddx]=sum}
[ s d_1, d_2, x]  = [s, d_2, x] d_1 + [s, d_2] [d_1, x] + [s, x] [d_1, d_2] + s [d_1, d_2, x].
\end{equation}
We shall prove that, for each $s \in S$, $d, d_i \in M$, $x \in X$,
\[
[s, d, x] \equiv 0 \pmod{I}, \quad [s, d_2] [d_1, x] + [s, x] [d_1, d_2] \equiv 0 \pmod{I} \quad \mbox{and}  \quad s [d_1, d_2, x] \equiv 0 \pmod{I}.
\]

\begin{lemma}
\label{lemma[sdx]}
For all $s \in S$, $d \in M$, $x \in X$, we have $[s, d, x] \equiv 0 \pmod{I}$.
\end{lemma}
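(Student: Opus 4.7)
The plan is to combine two applications of the Leibniz rule with the ``central-mod-$I$'' observations (\ref{[sx]center}) and (\ref{[sx][xx]center}), and then recognize each residual summand as an instance of generator (\ref{generator[sx][xx]}) of $I$. Write $d = y_1 \cdots y_k$ with $y_j \in X$; the cases $k = 0$ and $k = 1$ are immediate (respectively $[s, 1, x] = 0$ and $[s, y_1, x]$ is a generator of type (\ref{generator[sxx]})), so from now on I take $k \ge 2$.

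First I would expand $[s, d]$ by Leibniz to obtain
\[
[s, d] = \sum_{i=1}^k y_1 \cdots y_{i-1} [s, y_i] y_{i+1} \cdots y_k,
\]
and then expand each summand's commutator with $x$ by Leibniz again. The terms hitting the middle factor $[s, y_i]$ produce $y_1 \cdots y_{i-1} [s, y_i, x] y_{i+1} \cdots y_k$, which lies in $I$ because $[s, y_i, x]$ is a generator (\ref{generator[sxx]}) and $I$ is a two-sided ideal. In each of the remaining terms a single factor $[y_\ell, x]$ ($\ell \ne i$) sits among the $y$'s alongside $[s, y_i]$; using (\ref{[sx]center}) to slide $[s, y_i]$ freely mod $I$ and applying Leibniz in reverse to collect the $[y_\ell, x]$'s, one arrives at
\[
[s, d, x] \equiv \sum_{i=1}^k [s, y_i] \, [y_1 \cdots \widehat{y_i} \cdots y_k, x] \pmod{I}.
\]

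Next I would expand each inner commutator $[y_1 \cdots \widehat{y_i} \cdots y_k, x]$ by Leibniz once more, so the right-hand side becomes a double sum indexed by ordered pairs $i \ne j$. Applying (\ref{[sx]center}) to move $[s, y_i]$ adjacent to the new $[y_j, x]$, and then (\ref{[sx][xx]center}) to push the central factor $[s, y_i][y_j, x]$ past all remaining $y_\ell$'s, each term becomes $m_{ij}\, [s, y_i][y_j, x]$, where $m_{ij}$ is the word $y_1 \cdots y_k$ with the letters $y_i$ and $y_j$ deleted (in the original left-to-right order). In particular $m_{ij} = m_{ji}$, so pairing $(i, j)$ with $(j, i)$ yields
\[
[s, d, x] \equiv \sum_{i < j} m_{ij} \bigl( [s, y_i][y_j, x] + [s, y_j][y_i, x] \bigr) \pmod{I},
\]
and each bracketed expression is a generator of type (\ref{generator[sx][xx]}) (with $x_1 = y_i$, $x_2 = y_j$, $x_3 = x$), hence in $I$. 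Thus $[s, d, x] \equiv 0 \pmod{I}$.

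The only subtlety, though not a real difficulty, will be the bookkeeping that verifies the surviving word truly depends only on the unordered pair $\{i, j\}$; this is transparent because (\ref{[sx]center}) and (\ref{[sx][xx]center}) assert \emph{unconditional} centrality modulo $I$, so sliding the central factor to one side leaves the remaining $y_\ell$'s in their original left-to-right order regardless of the order of moves. Everything else is routine Leibniz expansion, and the identification of the final summands with generators of type (\ref{generator[sx][xx]}) closes the argument.
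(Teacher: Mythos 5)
Your proof is correct and follows essentially the same route as the paper: expand the triple commutator over the word $d = y_1\cdots y_k$ by the Leibniz rule, kill the diagonal terms $[s,y_i,x]$ as generators of type (\ref{generator[sxx]}), and use the centrality observations (\ref{[sx]center}) and (\ref{[sx][xx]center}) to pair the off-diagonal terms into generators of type (\ref{generator[sx][xx]}). The only cosmetic difference is that the paper expands $[d,s,x]=-[s,d,x]$ in one step via the prepackaged identity for $[a_1\cdots a_k,b_1,b_2]$, whereas you iterate the one-variable Leibniz rule twice and then regroup.
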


\begin{proof}
Let $d = y_1 \dots y_k$ where $k \ge 1$, $y_i \in X$. We have
\begin{align*}
&  [d, s, x] =  [y_1 \dots y_k, s, x] =  \sum_{i=1}^k y_1 \dots y_{i-1} [y_i, s, x] y_{i+1} \dots y_k
\\
+ \ & \sum_{1 \le i < i' \le k} y_1 \dots y_{i-1} \bigl( [y_i, s] y_{i+1} \dots y_{i' - 1} [y_{i'} , x] + [y_i, x] y_{i+1} \dots y_{i' - 1} [y_{i'} , s] \bigr) y_{i' +1} \dots y_k
\end{align*}
Note that $[s, y_i, x]$ is an element of the form (\ref{generator[sxx]}) so, for all $i$,
\[
[y_i, s, x] = - [s, y_i, x] \equiv 0 \pmod{I}.
\]
On the other hand, by (\ref{[sx]center}) and (\ref{[sx][xx]center}),
\begin{align*}
& [y_i, s] y_{i+1} \dots y_{i' - 1} [y_{i'} , x] + [y_i, x] y_{i+1} \dots y_{i' - 1} [y_{i'} , s]
\\
\equiv \ & - \bigl( [s, y_i] [y_{i'} , x]   + [s, y_{i'}] [y_i, x] \bigr)  y_{i+1} \dots y_{i' - 1} \pmod{I} .
\end{align*}
Since, for all $i, i'$,  $ [s, y_i] [y_{i'} , x]   + [s, y_{i'}] [y_i, x]$ is an element of the form (\ref{generator[sx][xx]}), we have
\[
[s, y_i] [y_{i'} , x]   + [s, y_{i'}] [y_i, x] \equiv 0 \pmod{I}.
\]
It follows that $[d,s,x] \equiv 0 \pmod{I}$ so $[s, d, x] = - [d,s,x] \equiv 0 \pmod{I}$, as required.
\end{proof}

\begin{lemma}
\label{lemma[sx][dd]}
For all $s \in S$, $d_i \in M$, $x \in X$, we have $[s, x] [d_1, d_2] + [s, d_2][d_1, x] \equiv 0 \pmod{I}$.
\end{lemma}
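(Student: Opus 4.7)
My plan is to deduce this lemma from Lemma \ref{lemma[dz][zz]=0} applied in the quotient ring $B = A/I$. I take $c$ to be the image of $s$ and let $Z$ be the image of the monoid $M$; in particular $Z$ is closed under multiplication, because $M$ is. Under these choices, the two hypotheses of Lemma \ref{lemma[dz][zz]=0} collapse to the single statement
\[
[s, d, d'] \equiv 0 \pmod{I} \qquad \text{for all } d, d' \in M ,
\]
since $d_1 d_2 \in M$ whenever $d_1, d_2 \in M$, which makes the hypothesis $[c, z_1 z_2, z_3] = 0$ a special case of $[c, z_1, z_2] = 0$. The conclusion of Lemma \ref{lemma[dz][zz]=0} then reads
\[
[s, z_1][z_2, z_3] + [s, z_2][z_1, z_3] \equiv 0 \pmod{I}
\]
for all $z_1, z_2, z_3 \in M$. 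Specializing $z_1 = x$, $z_2 = d_2$, $z_3 = d_1$ and using the antisymmetries $[d_2, d_1] = -[d_1, d_2]$, $[x, d_1] = -[d_1, x]$, this becomes $[s, x][d_1, d_2] + [s, d_2][d_1, x] \equiv 0 \pmod{I}$, which is exactly the desired congruence.

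The one thing left to verify is the centralising condition $[s, d, d'] \in I$ for $d, d' \in M$. I would write $d' = y_1 \cdots y_k$ with $y_j \in X$ and expand, using the product-derivation identity from the Preliminaries,
\[
[s, d, d'] = \bigl[[s, d], y_1 \cdots y_k\bigr] = \sum_{j=1}^{k} y_1 \cdots y_{j-1} \, [s, d, y_j] \, y_{j+1} \cdots y_k .
\]
By Lemma \ref{lemma[sdx]} each inner commutator $[s, d, y_j]$ lies in $I$, and since $I$ is a two-sided ideal of $A$ every summand $y_1 \cdots y_{j-1} [s, d, y_j] y_{j+1} \cdots y_k$ lies in $I$. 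Summing, $[s, d, d'] \in I$, as required.

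There is no serious obstacle: once Lemma \ref{lemma[sdx]} is available, the proof is a formal passage to the quotient $A/I$ plus a brief ideal-membership argument using that $I$ is two-sided. The only thing to be careful about is keeping track of the signs when specialising the symmetric conclusion of Lemma \ref{lemma[dz][zz]=0} to our asymmetric statement.
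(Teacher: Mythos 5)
Your proof is correct, but it takes a genuinely different route from the paper's. The paper proves this lemma by a direct double expansion: it writes $d_2 = y_1 \cdots y_k$, applies the derivation identity, uses the centralising congruences (\ref{[sx]center}) and Corollary \ref{corollary[sx][bx]a} to strip off the monomial factors and reduce to $[s,x][d_1,y_i] + [s,y_i][d_1,x]$, then repeats the expansion with $d_1 = z_1 \cdots z_\ell$ to land on generators of type (\ref{generator[sx][xx]}). You instead first upgrade Lemma \ref{lemma[sdx]} from $[s,d,x] \in I$ (with $x \in X$) to $[s,d,d'] \in I$ (with $d' \in M$) via one application of the identity $[a, b_1 \cdots b_k] = \sum_j b_1 \cdots b_{j-1}[a,b_j]b_{j+1}\cdots b_k$ and the two-sidedness of $I$, and then invoke the general Lemma \ref{lemma[dz][zz]=0} in $B = A/I$ with $Z$ the image of $M$; the key observation that $M$ is multiplicatively closed makes the hypothesis $[d, z_1 z_2, z_3] = 0$ a special case of $[d, z_1, z_2] = 0$, and the specialisation $z_1 = x$, $z_2 = d_2$, $z_3 = d_1$ (legitimate since $X \subseteq M$) gives exactly the stated congruence after a sign flip. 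Your argument is shorter and reuses the Preliminaries machinery that the paper itself deploys elsewhere (e.g., in Lemma \ref{lemma[ua][aa]3u[aaa]}), at the cost of depending on Lemma \ref{lemma[sdx]} as an input; the paper's computation is self-contained within the generator manipulations of Section 3 and uniform in style with the neighbouring lemmas. There is no gap in your reasoning.
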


\begin{proof}
Let $d_2 = y_1 \dots y_k$ where $y_i \in X$. We have
\begin{align*}
& [s, x] [d_1, d_2] + [s, d_2][d_1, x] = [s, x] [d_1, y_1 \dots y_k] + [s, y_1 \dots y_k][d_1, x]
\\
& = \  \sum_{i=1}^k \bigl( [s, x] y_1 \dots y_{i-1} [d_1, y_i ] y_{i+1} \dots y_k + y_1 \dots y_{i-1} [s, y_i] y_{i+1} \dots y_k [d_1, x] \bigr) .
\end{align*}
By (\ref{[sx]center}) and Corollary \ref{corollary[sx][bx]a},
\begin{align*}
 & [s, x] y_1 \dots y_{i-1} [d_1, y_i ] y_{i+1} \dots y_k + y_1 \dots y_{i-1} [s, y_i] y_{i+1} \dots y_k [d_1, x]
 \\
 & \equiv  y_1 \dots y_{i-1}  y_{i+1} \dots y_k \bigl( [s, x] [d_1, y_i ] +  [s, y_i]  [d_1, x] \bigr) \pmod{I} .
\end{align*}
Suppose that $d_1 = z_1 \dots z_{\ell}$. We have
\begin{align*}
& [s, x] [d_1, y_i ] +  [s, y_i]  [d_1, x] = [s, x] [z_1 \dots z_{\ell}, y_i ] +  [s, y_i]  [z_1 \dots z_{\ell}, x]
\\
= \ & \sum_{j=1}^{\ell}  \bigl(  [s, x] z_1 \dots z_{j - 1} [z_j , y_i ] z_{j+1} \dots z_{\ell} +  [s, y_i] z_1 \dots z_{j - 1} [z_j , x] z_{j + 1} \dots z_{\ell} \bigr)
\end{align*}
where, by (\ref{[sx]center}) and (\ref{[sx][xx]center}),
\begin{align*}
& [s, x] z_1 \dots z_{j - 1} [z_j , y_i ] z_{j+1} \dots z_{\ell} +  [s, y_i] z_1 \dots z_{j - 1} [z_j , x] z_{j + 1} \dots z_{\ell}
\\
& \equiv \ z_1 \dots z_{j - 1} z_{j+1} \dots z_{\ell} \bigl(  [s, x]  [z_j , y_i ]  +  [s, y_i]  [z_j , x]  \bigr) \pmod{I} .
\end{align*}
Since $[s, x]  [y_i, z_j ]  +  [s, y_i]  [x, z_j]$ is an element of the form (\ref{generator[sx][xx]}), we have
\[
[s, x]  [z_j , y_i ]  +  [s, y_i]  [z_j , x] = - \bigl( [s, x]  [y_i, z_j ]  +  [s, y_i]  [x, z_j] \bigr) \equiv 0 \pmod{I} .
\]
The result follows.
\end{proof}

\begin{lemma}
\label{lemmas[ddx]}
For all $s \in S$, $d_i \in M$, $x \in X$, we have $s [d_1, d_2, x] \equiv 0 \pmod{I}$.
\end{lemma}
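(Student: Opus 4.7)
The plan is to use Lemma~\ref{L2} to reduce the statement to showing $s[c, y, x] \equiv 0 \pmod{I}$ for all $s \in S$, $c \in M$, $y, x \in X$, and then prove this by induction on the length $|c|$ of $c$ as a word in~$X$. Indeed, Lemma~\ref{L2} expresses $[d_1, d_2]$ as a linear combination of commutators $[c, y]$ with $c \in M$ and $y \in X$, so $s[d_1, d_2, x]$ is a corresponding linear combination of terms $s[c, y, x]$. The base case $|c| = 1$ is precisely generator~(\ref{generators[xxx]}). For the inductive step, decompose $c = v_1 c'$ with $v_1 \in X$, $c' \in M$ and expand
\[
[v_1 c', y, x] = v_1 [c', y, x] + [v_1, y, x]\, c' + [v_1, y][c', x] + [v_1, x][c', y].
\]
The summand $s[v_1, y, x]\, c'$ lies in $I$ by generator~(\ref{generators[xxx]}); the summand $s v_1 [c', y, x]$ is handled by writing $s v_1 = v_1 s + [s, v_1]$, applying the inductive hypothesis to $v_1 s[c', y, x]$, and invoking Lemma~\ref{lemma[sx][bxx]} on the correction $[s, v_1][c', y, x]$ after rewriting $[c', y, x]$ via Lemma~\ref{L2} as a sum of $[b_j, x_2, x_3]$ with $b_j \in M$.

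The remaining cross-term $s\bigl([v_1, y][c', x] + [v_1, x][c', y]\bigr)$ motivates a sub-claim, proved by a separate induction on $|c|$: for all $c \in M$,
\[
s\bigl([v_1, y][c, x] + [v_1, x][c, y]\bigr) \equiv 0 \pmod{I}.
\]
The base case $|c| = 1$ is generator~(\ref{generators[xx][xx]}) up to the antisymmetry $[a, b] = -[b, a]$. For $c = w c''$, apply the identity $[v_1, y]\, w = w\, [v_1, y] + [v_1, y, w]$ (and its counterpart with $y$ replaced by $x$) to decompose
\[
[v_1, y][c, x] + [v_1, x][c, y] = w\, F(c'') + [v_1, y, w][c'', x] + [v_1, x, w][c'', y] + F(w)\, c'',
\]
where $F(b) := [v_1, y][b, x] + [v_1, x][b, y]$. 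Multiplying by~$s$: the piece $sF(w)\, c''$ is in $I$ by the sub-claim's base case; each of $s[v_1, y, w][c'', x]$ and $s[v_1, x, w][c'', y]$ is in $I$ because $s[v_1, y, w]$ and $s[v_1, x, w]$ are instances of generator~(\ref{generators[xxx]}); and $s w F(c'') = w s F(c'') + [s, w] F(c'')$, where $w s F(c'')$ is $\equiv 0$ by the sub-claim inductive hypothesis applied to $c''$, and $[s, w] F(c'')$ is $\equiv 0$ by an auxiliary lemma.

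The auxiliary lemma asserts $[s, w] F(c) \equiv 0 \pmod{I}$ for all $s \in S$, $w, v_1, y, x \in X$, $c \in M$; it is proved by a further induction on $|c|$, with Lemma~\ref{lemma[sx][xx][xx]inI} supplying the base case $c \in X$. For $c = u c''$, the same expansion splits $[s, w] F(c)$ into four pieces: $[s, w] F(u)\, c''$ is in $I$ by the base case; the two mixed terms $[s, w][v_1, y, u][c'', x]$ and $[s, w][v_1, x, u][c'', y]$ are in $I$ by Lemma~\ref{lemma[sx][bxx]} (each triple commutator has the form $[b, x_2, x_3]$ with $b = v_1 \in M$); and the leading $[s, w]\, u\, F(c'')$ is congruent modulo $I$ to $u\, [s, w] F(c'')$ by the centrality property~(\ref{[sx]center}), which reduces to the inductive hypothesis. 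The principal obstacle will be coordinating these three intertwined inductions so that each invocation of an inductive hypothesis is legitimate; in particular, the auxiliary lemma must be proved in full generality \emph{before} the sub-claim invokes it, and the sub-claim likewise must be established before the main induction on $s[c, y, x]$ uses it, so that no appeal is circular.
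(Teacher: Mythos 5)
Your proof is correct and rests on exactly the same ingredients as the paper's: the reduction via Lemma~\ref{L2} to $s[d,x_1,x_2]$ with $d\in M$, the Leibniz expansion of a triple commutator of a product, the centrality congruence (\ref{[sx]center}), the generators (\ref{generators[xxx]}), (\ref{generator[sx][xxx]}), (\ref{generators[xx][xx]}), and Lemmas~\ref{lemma[sx][xx][xx]inI} and~\ref{lemma[sx][bxx]}. The only difference is organizational: you peel off one letter of $d$ at a time via three nested inductions on word length, whereas the paper writes out the full multi-factor expansion $Q_1+Q_2$ in closed form and disposes of each summand directly, so the two arguments generate the same terms and discharge them in the same way.
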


\begin{proof}
By Lemma \ref{L2}, each commutator $[d_1, d_2, x]$ $(d_i \in M, x \in X)$ is a linear combination of commutators of the form $[d, x_1, x_2]$ where $d \in M$, $x_i \in X$. Hence, to prove the lemma it suffices to check that, for all $s \in S$, $d \in M$, $x_i \in X$,
\[
s [d, x_1, x_2] \equiv 0 \pmod{I}.
\]
Let $d = y_1 \dots y_k$ $( k \ge 1, y_j \in X)$. Then
\begin{equation}
\label{s[dxx]Q1Q2}
 s [d, x_1, x_2] = s [ y_1 \dots y_k, x_1, x_2] = Q_1 + Q_2
\end{equation}
where
\begin{align}
& Q_1 = \  \sum_{i = 1}^k s y_1 \dots y_{i-1} [y_i, x_1, x_2] y_{i+1} \dots y_k , \nonumber
\\
\label{Q2}
& Q_2 =  \  \sum_{1 \le i <i' \le k} s y_1 \dots y_{i-1} \bigl( [y_i, x_1] y_{i+1} \dots y_{i' - 1} [y_{i'} , x_2] + [y_i, x_2] y_{i+1} \dots y_{i' - 1} [y_{i'} , x_1] \bigr) y_{i'+1} \dots y_k .
\end{align}
First we check that $Q_1 \equiv 0 \pmod{I}$. We have
\[
s y_1 \dots y_{i-1} = y_1 \dots y_{i-1} s + [s, y_1 \dots y_{i-1}]  = y_1 \dots y_{i-1} s + \sum_{j=1}^{i-1} y_1 \dots y_{j-1} [s, y_j] y_{j+1} \dots y_{i-1}
\]
so
\[
s y_1 \dots y_{i-1} [y_i, x_1, x_2] = y_1 \dots y_{i-1} s [y_i, x_1, x_2] + \sum_{j=1}^{i-1} y_1 \dots y_{j-1} [s, y_j] y_{j+1} \dots y_{i-1} [y_i, x_1, x_2] .
\]
Note that $s [y_i, x_1, x_2]$ is an element of the form  (\ref{generators[xxx]}) so $s [y_i, x_1, x_2] \equiv 0 \pmod{I}$. On the other hand, by (\ref{[sx]center}),
\[
[s, y_j] y_{j+1} \dots y_{i-1} [y_i, x_1, x_2]  \equiv  y_{j+1} \dots y_{i-1} [s, y_j] [y_i, x_1, x_2]  \pmod{I}
\]
where $[s, y_j] [y_i, x_1, x_2]  \equiv 0 \pmod{I}$ because $[s, y_j] [y_i, x_1, x_2] $ is an element of the form (\ref{generator[sx][xxx]}). It follows that, for each $i$,
\[
s y_1 \dots y_{i-1} [y_i, x_1, x_2] \equiv 0 \pmod{I}
\]
and, therefore,
\begin{equation}
\label{Q10}
Q_1 \equiv 0 \pmod{I}.
\end{equation}

Now we check that $Q_2 \equiv 0 \pmod{I}$. We have
\begin{equation}
\label{Q2Q1'Q2'}
s y_1 \dots y_{i-1} \bigl( [y_i, x_1] y_{i+1} \dots y_{i' - 1} [y_{i'} , x_2] + [y_i, x_2] y_{i+1} \dots y_{i' - 1} [y_{i'} , x_1] \bigr) = Q_1' + Q_2'
\end{equation}
where
\begin{align}
\label{Q1'}
Q_1' = \ &  y_1 \dots y_{i-1} s \bigl( [y_i, x_1] y_{i+1} \dots y_{i' - 1} [y_{i'} , x_2] + [y_i, x_2] y_{i+1} \dots y_{i' - 1} [y_{i'} , x_1] \bigr) ,
\\
Q_2' = \ & [s,  y_1 \dots y_{i-1}] \bigl( [y_i, x_1] y_{i+1} \dots y_{i' - 1} [y_{i'} , x_2] + [y_i, x_2] y_{i+1} \dots y_{i' - 1} [y_{i'} , x_1] \bigr) \nonumber
\\
=  \ & \sum_{j=1}^{i-1} y_1 \dots y_{j-1} [s, y_j] y_{j+1} \dots y_{i-1} \bigl( [y_i, x_1] y_{i+1} \dots y_{i' - 1} [y_{i'} , x_2] + [y_i, x_2] y_{i+1} \dots y_{i' - 1} [y_{i'} , x_1] \bigr) . \nonumber
\end{align}
By (\ref{[sx]center}) and (\ref{[sx][xx]center}), we have
\begin{align*}
& [s, y_j] y_{j+1} \dots y_{i-1} \bigl( [y_i, x_1] y_{i+1} \dots y_{i' - 1} [y_{i'} , x_2] + [y_i, x_2] y_{i+1} \dots y_{i' - 1} [y_{i'} , x_1] \bigr)
\\
\equiv \ &  y_{j+1} \dots y_{i-1} y_{i+1} \dots y_{i' - 1} [s, y_j] \bigl( [y_i, x_1]  [y_{i'} , x_2] + [y_i, x_2]  [y_{i'} , x_1] \bigr) \pmod{I}
\end{align*}
where, by Lemma \ref{lemma[sx][xx][xx]inI},
\[
[s, y_j] \bigl( [y_i, x_1]  [y_{i'} , x_2] + [y_i, x_2]  [y_{i'} , x_1] \bigr) \equiv 0 \pmod{I} .
\]
It follows that
\begin{equation}
\label{Q2'0}
Q_2' \equiv 0 \pmod{I}.
\end{equation}

Now we check that $Q_1' \equiv 0 \pmod{I}$. We have
\begin{align}
& s \bigl( [y_i, x_1] y_{i+1} \dots y_{i' - 1} [y_{i'} , x_2] + [y_i, x_2] y_{i+1} \dots y_{i' - 1} [y_{i'} , x_1] \bigr)  \nonumber
\\
\label{Q1'sum}
= \ & y_{i+1} \dots y_{i' - 1} s \bigl( [y_i, x_1]  [y_{i'} , x_2] + [y_i, x_2]  [y_{i'} , x_1] \bigr) + \bigl[ s  [y_i, x_1], y_{i+1} \dots y_{i' - 1} \bigr]  [y_{i'} , x_2]
\\
+ \ &  \bigl[ s  [y_i, x_2], y_{i+1} \dots y_{i' - 1} \bigr]  [y_{i'} , x_1] . \nonumber
\end{align}
Since $s \bigl( [y_i, x_1]  [y_{i'} , x_2] + [y_i, x_2]  [y_{i'} , x_1] \bigr)$ is an element of the form (\ref{generators[xx][xx]}), we have
\begin{equation}
\label{Q1'firstterm}
s \bigl( [y_i, x_1]  [y_{i'} , x_2] + [y_i, x_2]  [y_{i'} , x_1] \bigr) \equiv 0 \pmod{I} .
\end{equation}
On the other hand,
\begin{align*}
& \bigl[ s  [y_i, x_j], y_{i+1} \dots y_{i' - 1} \bigr] = \sum_{m=i+1}^{i'-1} y_{i+1} \dots y_{m-1} \bigl[ s [y_i, x_j], y_m \bigr] y_{m+1} \dots y_{i'-1}
\\
= \ & \sum_{m=i+1}^{i'-1} y_{i+1} \dots y_{m-1} \bigl( [s, y_m] [y_i, x_j] + s[y_i, x_j, y_m] \bigr) y_{m+1} \dots y_{i'-1}
\end{align*}
Since $s[y_i, x_j, y_m]$ is an element of the form (\ref{generators[xxx]}), we have $s[y_i, x_j, y_m] \equiv 0 \pmod{I}$. Hence,
\[
 \bigl[ s  [y_i, x_j], y_{i+1} \dots y_{i' - 1} \bigr] \equiv  \sum_{m=i+1}^{i'-1} y_{i+1} \dots y_{m-1} [s, y_m] [y_i, x_j]  y_{m+1} \dots y_{i'-1} \pmod{I}
\]
and
\begin{align*}
& \bigl[ s  [y_i, x_1], y_{i+1} \dots y_{i' - 1} \bigr]  [y_{i'} , x_2]  +   \bigl[ s  [y_i, x_2], y_{i+1} \dots y_{i' - 1} \bigr]  [y_{i'} , x_1]
\\
& \equiv \  \sum_{m=i+1}^{i'-1} y_{i+1} \dots y_{m-1} \bigl( [s, y_m] [y_i, x_1]  y_{m+1} \dots y_{i'-1} [y_{i'} , x_2]
\\
& + \  [s, y_m] [y_i, x_2]  y_{m+1} \dots y_{i'-1} [y_{i'} , x_1] \bigr) \pmod{I}.
\end{align*}
By  (\ref{[sx][xx]center}), we have
\begin{align*}
& [s, y_m] [y_i, x_1]  y_{m+1} \dots y_{i'-1} [y_{i'} , x_2]  + [s, y_m] [y_i, x_2]  y_{m+1} \dots y_{i'-1} [y_{i'} , x_1]
\\
& \equiv \ y_{m+1} \dots y_{i'-1}   [s, y_m] \bigl( [y_i, x_1]   [y_{i'} , x_2]  +  [y_i, x_2]   [y_{i'} , x_1] \bigr) \pmod{I} .
\end{align*}
where, by Lemma \ref{lemma[sx][xx][xx]inI},
\[
[s, y_m] \bigl( [y_i, x_1]   [y_{i'} , x_2]  +  [y_i, x_2]   [y_{i'} , x_1] \bigr) = - [s, y_m] \bigl( [y_i, x_1]   [x_2, y_{i'} ]  +  [y_i, x_2]   [x_1, y_{i'} ] \bigr) \equiv 0 \pmod{I}.
\]
Hence,
\[
 \bigl[ s  [y_i, x_1], y_{i+1} \dots y_{i' - 1} \bigr]  [y_{i'} , x_2]  +   \bigl[ s  [y_i, x_2], y_{i+1} \dots y_{i' - 1} \bigr]  [y_{i'} , x_1] \equiv 0 \pmod{I}
\]
so, by (\ref{Q1'}), (\ref{Q1'sum}), (\ref{Q1'firstterm}) and the congruence above, $Q_1' \equiv 0 \pmod{I}$. It follows that,  by (\ref{Q2}), (\ref{Q2Q1'Q2'}) and (\ref{Q2'0}),  $Q_2 \equiv 0 \pmod{I}$, therefore, by (\ref{s[dxx]Q1Q2}) and (\ref{Q10}), $s [d, x_1, x_2] \equiv 0 \pmod{I}$ for all $s \in S$, $d \in M$, $x_i \in X$. The result follows.
\end{proof}

Thus, by (\ref{[sddx]=sum}) and Lemmas \ref{lemma[sdx]}, \ref{lemma[sx][dd]} and \ref{lemmas[ddx]}, (\ref{[sddx]}) holds. It follows that  (\ref{[bsbbx]inI}) holds and, therefore,  $W \subseteq I$. This  completes the proof of Theorem \ref{generatorsW}.
\end{proof}

\begin{proof}[Proof of Proposition \ref{proposition3vinI'} ]
Recall that $I'$ is the two-sided ideal of $A$ generated by all elements of the forms (\ref{generator[sxx]}) and   (\ref{generator[sx][xx]}). It is clear that, for all $s \in S$, $x_m \in X$,
\begin{equation*}
\label{[sx]x}
[s, x_i] x_j \equiv x_j [s, x_i] \pmod{I'} \qquad
\mbox{ and }
\qquad  [s, x_i][x_j, x_k] \equiv - [s, x_j] [x_i, x_k] \pmod{I'} .
\end{equation*}
It follows that
\begin{align*}
& [s, x_1][ x_2, x_3, x_4] = [s, x_1][ x_2, x_3] x_4 -  [s, x_1]x_4 [ x_2, x_3]
\\
\equiv \ & [s, x_1][ x_2, x_3] x_4 -  x_4 [s, x_1] [ x_2, x_3] \pmod{I'}  \equiv - [s, x_2][ x_1, x_3] x_4 +  x_4 [s, x_2] [ x_1, x_3] \pmod{I'}
\\
\equiv \ & - [s, x_2][ x_1, x_3] x_4 +  [s, x_2] x_4 [ x_1, x_3] \pmod{I'} = - [s, x_2][ x_1, x_3, x_4] ,
\end{align*}
that is, 
\[
[s, x_1][ x_2, x_3, x_4]  + [s, x_2][ x_1, x_3, x_4] \equiv 0 \pmod{I'}
\]
for all $s \in S$, $x_i \in X$. By Corollary \ref{corollary3[sz][zzz]}, we have $ 3 \, [s, x_1] [x_2, x_3, x_4] \equiv 0 \pmod{I'}$ for all $s \in S$, $x_i \in X$, as required.

The proof of Proposition \ref{proposition3vinI'} is completed.
\end{proof}


\section{Proof of Theorem \ref{generatorsTn} }

Let $I^{(m)}$ $(m \ge 2)$ be the two-sided ideal of $A$ generated by the set
\[
S^{(m)} = \{ [y_1, y_2, \dots , y_m] \mid y_1, y_m \in X; \ y_2, \dots , y_{m-1} \in X \cup X^2 \} .
\]
Define $S^{(1)} = \{ 1 \}$, then $I^{(1)} = A$.

We have to prove that, for each $n \ge 1$, $T^{(n)} = I^{(n)}$.  We will prove this by induction on $n$. If $n = 1$ then $T^{(1)} = A = I^{(1)}$. If $n = 2$  then $S^{(2)} = \{ [x_1, x_2] \mid x_i \in X \}$ so $I^{(2)} = T^{(2)}$.

Suppose that $n>2$ and  $T^{(n-2)} = I^{(n-2)}$; we will check that $T^{(n)} = I^{(n)}$. It is clear that $I^{(n)} \subseteq T^{(n)}$ so it remains to prove that $T^{(n)} \subseteq I^{(n)}$. 

By Proposition \ref{A[Tn-2AA]A=Tn}, $T^{(n)}$ is generated as a two-sided ideal in $A$ by the set 
\[
\{ [u, a_1, a_2] \mid u \in T^{(n-2)}, a_1, a_2 \in A \} .
\]
Hence, by Corollary \ref{corollarygeneratorsW},  the ideal $T^{(n)}$ is generated by the following elements:
\begin{equation}
\label{generatorTn[sxx]}
[s, x_1, x_2] \qquad (s \in S^{(n-2)}, x_i \in X),
\end{equation}
\begin{equation}
\label{generatorTns[xxx]}
s [x_1, x_2, x_3] \qquad (s \in S^{(n-2)}, x_i \in X ),
\end{equation}
\begin{equation}
\label{generatorTn[sx][xx]}
[s, x_1][x_2,x_3] + [s, x_2][x_1, x_3] \qquad (s \in S^{(n-2)}, x_i \in X ),
\end{equation}
\begin{equation}
\label{generatorTns[xx]xx]}
s \bigl( [x_1, x_2][x_3, x_4] + [x_1, x_3][x_2, x_4] \bigr) \qquad (s \in S^{(n-2)}, x_i \in X ) .
\end{equation}
It follows that to prove that $T^{(n)} \subseteq I^{(n)}$ it suffices to check that all elements (\ref{generatorTn[sxx]})--(\ref{generatorTns[xx]xx]}) belong to $I^{(n)}$.

\medskip
First, we note that the elements of the form (\ref{generatorTn[sxx]}) belong to $I^{(n)}$ because they belong to $S^{(n)}$.

Further, suppose that $d = [y_1, \dots , y_{n-2}]$ where $y_1 \in X$, $y_2, \dots , y_{n-2} \in X \cup X^2$. Then 
\[
[d, x_1, x_2], \ [d, x_1 x_2, x_3] \in S^{(n)}
\]
so $[d, x_1, x_2] \equiv 0 \pmod{I^{(n)} }$ and $[d, x_1 x_2, x_3] \equiv 0 \pmod{I^{(n)} }$ for all $x_i \in X$. By Lemma \ref{lemma[dz][zz]=0}, with $B = A/I^{(n)}$ and $Z = \{ x + I^{(n)} \mid x \in X \}$, we have 
\begin{equation}
\label{[cn-2x][xx]inIn}
[d, x_1][x_2, x_3] +  [d, x_2 ] [x_1, x_3] \equiv 0 \pmod{I^{(n)} } .
\end{equation}

Note that each $s \in S^{(n-2)}$ is of the form $s = [y_1, y_2, \dots , y_{n-2}]$ where $y_1, y_{n-2} \in X$, $y_2, \dots , y_{n-3} \in X \cup X^2$. Hence, it follows from (\ref{[cn-2x][xx]inIn}) that, in particular, the elements of the form (\ref{generatorTn[sx][xx]}) belong to $I^{(n)}$.

\medskip
Now we prove that the elements of the form (\ref{generatorTns[xxx]}) belong to $I^{(n)}$. Let $c = [y_1, y_2, \dots , y_{n-3}]$ where $y_1 \in X$, $y_2, \dots , y_{n-3}\in X \cup X^2$.  Let $x_i \in X$. Then we have 
\[
[c, x_1, x_2, x_3]  , \  [c, x_1, x_2 x_3, x_4] , \ [c, x_1 x_2, x_3, x_4] \in S^{(n)}
\]
so $[c, x_1, x_2, x_3] \equiv 0 \pmod{I^{(n)} }$, $[c, x_1, x_2 x_3, x_4] \equiv 0 \pmod{I^{(n)} }$, $[c, x_1 x_2, x_3, x_4] \equiv 0 \pmod{I^{(n)} }$. Then, by Lemma \ref{lemma[cz][zzz]=0}, with $B = A/I^{(n)}$ and $Z = \{ x + I^{(n)} \mid x \in X \}$, we have
\begin{equation*}
\label{c12-21}
[c, x_1][x_2, x_3, x_4] + [c, x_2]  [x_1, x_3, x_4]  \equiv 0 \pmod{I^{(n)} }
\end{equation*}
for all $x_i \in X$. Now, by Corollary \ref{corollary3[sz][zzz]}, 
\[
3 \  [c, x_1] [x_2, x_3, x_4] \equiv  0 \pmod{I^{(n)} } .
\]
Since $3$ is invertible in $R$, we have $[c, x_1] [x_2, x_3, x_4] \in I^{(n)} $ for all $x_i \in X$ and all $c = [y_1, y_2, \dots , y_{n-3}]$ where $y_1 \in X$, $y_2, \dots , y_{n-3}\in X \cup X^2$. In other words, we have $s [x_2,x_3,x_4] \in I^{(n)}$ for all $s \in S^{(n-2)}$, $x_i \in X$, that is, all elements of the form (\ref{generatorTns[xxx]}) belong to $I^{(n)}$.

\medskip
Finally, we check that the elements of the form (\ref{generatorTns[xx]xx]}) belong to $I^{(n)}$. Let $g(z_1, z_2, z_3, z_4) = [z_1, z_2] [z_3, z_4] + [z_1, z_3] [z_2, z_4]$. Then each element of the form 
(\ref{generatorTns[xx]xx]}) can be written as 
\[
[c, x_1] g( x_2, x_3, x_4, x_5) 
\]
where $x_i \in X$, $c = [y_1, y_2, \dots , y_{n-3}]$, $y_1 \in X$, $y_2, \dots , y_{n-3}\in X \cup X^2$.

\begin{lemma}
\label{lemma[c1]g(2345)}
For all $x_i \in X$, 
\[
[c, x_1] g( x_2, x_3, x_4, x_5) \equiv - [c, x_2] g( x_1, x_3, x_4, x_5) \pmod{I^{(n)} } .
\]
\end{lemma}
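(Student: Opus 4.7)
\noindent\textit{Proof plan.} The plan is to apply the Leibniz rule to package the ``free'' indices $x_1, x_2$ into the single $X^2$-element $x_1 x_2$, and then to reduce the desired congruence to three sub-congruences, each of which follows from facts already established modulo $I^{(n)}$.

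First, I would expand
\[
[c, x_1 x_2, x_3] = [c, x_1][x_2, x_3] + [c, x_1, x_3]\, x_2 + x_1 [c, x_2, x_3] + [x_1, x_3][c, x_2]
\]
using $[c, x_1 x_2] = [c, x_1] x_2 + x_1 [c, x_2]$ followed by the derivation rule for $[\,\cdot\,, x_3]$. Since $\bigl[[c, x_2], [x_1, x_3]\bigr] = [c, x_2, x_1, x_3] - [c, x_2, x_3, x_1]$ is a difference of two elements of $S^{(n)}$, we have $[c, x_2][x_1, x_3] \equiv [x_1, x_3][c, x_2] \pmod{I^{(n)}}$, and the identity above rearranges to
\[
[c, x_1][x_2, x_3] + [c, x_2][x_1, x_3] \equiv [c, x_1 x_2, x_3] - [c, x_1, x_3]\, x_2 - x_1 [c, x_2, x_3] \pmod{I^{(n)}}.
\]

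Next, I would regroup $[c, x_1]g(x_2, x_3, x_4, x_5) + [c, x_2]g(x_1, x_3, x_4, x_5)$ by factoring $[x_4, x_5]$ from one pair of terms and $[x_3, x_5]$ from the other; substituting the displayed congruence (and its $x_3 \leftrightarrow x_4$ variant) reduces the claim to showing that three sums are $\equiv 0 \pmod{I^{(n)}}$: \textbf{(a)} $[c, x_1 x_2, x_3][x_4, x_5] + [c, x_1 x_2, x_4][x_3, x_5]$, annihilated by Lemma \ref{lemma[dz][zz]=0} applied to $d = [c, x_1 x_2]$ inside $B = A/I^{(n)}$, the hypotheses holding because the length-$n$ commutators $[c, x_1 x_2, z_1, z_2]$ and $[c, x_1 x_2, z_1 z_2, z_3]$ lie in $S^{(n)}$ (the slot $x_1 x_2 \in X^2$ being a permissible middle entry); \textbf{(b)} $[c, x_1, x_3]\, x_2\, [x_4, x_5] + [c, x_1, x_4]\, x_2\, [x_3, x_5]$, handled by first commuting $x_2$ leftward modulo $I^{(n)}$ (valid since $[c, x_1, x_j, x_2] \in S^{(n)}$) and then invoking the already-established identity (\ref{generatorTn[sx][xx]}) with $s = [c, x_1] \in S^{(n-2)}$; \textbf{(c)} $x_1\bigl([c, x_2, x_3][x_4, x_5] + [c, x_2, x_4][x_3, x_5]\bigr)$, annihilated directly by (\ref{generatorTn[sx][xx]}) with $s = [c, x_2] \in S^{(n-2)}$.

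The conceptual crux, and the step I expect to be the main obstacle to discover, is the packaging trick $x_1, x_2 \rightsquigarrow x_1 x_2 \in X^2$ inside the long commutator $[c, x_1 x_2, x_3, x_4]$: this turns what would otherwise be a length-$n$ commutator with ``too many'' pure $X$-entries into a legitimate member of $S^{(n)}$, which is precisely why Theorem \ref{generatorsTn} allows middle entries from $X \cup X^2$ rather than only $X$. Once this observation is made, the remaining bookkeeping, to verify that the Leibniz error terms assemble (after the $x_3 \leftrightarrow x_4$ symmetrisation) into sums already annihilated by (\ref{generatorTn[sx][xx]}), is routine.
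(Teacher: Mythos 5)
Your proposal is correct and follows essentially the same route as the paper: both proofs rest on the observation that $[c,x_1x_2,x_3][x_4,x_5]+[c,x_1x_2,x_4][x_3,x_5]\in I^{(n)}$ (legitimate because $x_1x_2\in X^2$ may occupy a middle slot of an element of $S^{(n)}$), followed by a Leibniz expansion of $[c,x_1x_2,x_i]$ whose error terms $[c,x_1,x_i]x_2$, $x_1[c,x_2,x_i]$ and $\bigl[[c,x_2],[x_1,x_i]\bigr]$ are disposed of exactly as you describe. The paper merely runs the computation in the opposite direction (expanding the known element of $I^{(n)}$ to extract $[c,x_1]g+[c,x_2]g$ rather than substituting into it), which is the same argument.
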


\begin{proof}
Note that, by (\ref{[cn-2x][xx]inIn}), 
\[
[c, x_1x_2, x_3][x_4, x_5] +  [c, x_1 x_2, x_4][x_3, x_5] \in I^{(n)} .
\]
We have
\begin{align*}
& [c, x_1x_2, x_i] = - [x_1 x_2, c, x_i] 
\\
= \ & -  x_1 [x_2, c, x_i] - [x_1, c] [x_2, x_i] - [x_1, x_i] [x_2, c]  - [x_1, c, x_i] x_2 
\\
=  \ & x_1 [c, x_2, x_i] + [c, x_1] [x_2, x_i] +  [c, x_2] [x_1, x_i] +  [c, x_1, x_i] x_2 - \bigl[ [c, x_2], [x_1, x_i] \bigr] 
\end{align*}
It follows that 
\begin{align*}
& [c, x_1x_2, x_3][x_4, x_5] +  [c, x_1 x_2, x_4][x_3, x_5]
\\
= \ & \bigl( [c, x_1, x_3] x_2 + [c, x_1][x_2, x_3] +  [c, x_2 ] [x_1, x_3] + x_1 [c, x_2, x_3]
\\
- \ & \bigl[ [c, x_2 ], [x_1, x_3] \bigr] \bigr) [x_4, x_5] + \bigl( [c, x_1, x_4] x_2 + [c, x_1][x_2, x_4] +  [c, x_2 ] [x_1, x_4] 
\\
+ \ & x_1 [c, x_2, x_4] - \bigl[ [c, x_2 ], [x_1, x_4] \bigr] \bigr) [x_3, x_5] \in I^{(n)} .
\end{align*}
Note that $\bigl[ [c, x_i ], [x_j, x_k] \bigr] = [c, x_i, x_j, x_k] - [c, x_i, x_k, x_j] \in I^{(n)}$ and, by (\ref{[cn-2x][xx]inIn}),
\[
\bigl( [c, x_i, x_3] [x_4, x_5] + [c, x_i, x_4] [x_3, x_5] \bigr) \in I^{(n)}
\]
so 
\[
 x_1 \bigl( [c, x_2, x_3] [x_4, x_5] + [c, x_2, x_4] [x_3, x_5] \bigr) \in I^{(n)}
\]
and
\begin{align*}
& [c, x_1, x_3] x_2 [x_4, x_5] + [c, x_1, x_4] x_2 [x_3, x_5] =  x_2 \bigl( [c, x_1, x_3]  [x_4, x_5]
\\
+ \ &  [c, x_1, x_4]  [x_3, x_5] \bigr)  + [c, x_1, x_3, x_2] [x_4, x_5] + [c, x_1, x_4, x_2] [x_3, x_5]  \in I^{(n)} .
\end{align*}
It follows that
\begin{align*}
& \bigl( [c, x_1][x_2, x_3] +  [c, x_2 ] [x_1, x_3] \bigr) [x_4, x_5] + \bigl( [c, x_1][x_2, x_4] +  [c, x_2 ] [x_1, x_4] \bigr) [x_3, x_5]
\\
= \ & [c, x_1] \bigl( [x_2, x_3] [x_4, x_5] + [x_2, x_4] [x_3, x_5] \bigr) + [c, x_2 ] \bigl( [x_1, x_3] [x_4, x_5] + [x_1, x_4] [x_3, x_5] \bigr) \in I^{(n)} ,
\end{align*}
that is, 
\[
[c, x_1] g (x_2, x_3, x_4, x_5) + [c, x_2 ] g( x_1, x_3, x_4, x_5)  \in I^{(n)} .
\]
Thus,
\begin{equation}
[c, x_1] g (x_2, x_3, x_4, x_5) \equiv -  [c, x_2 ] g( x_1, x_3, x_4, x_5)  \pmod{I^{(n)} }  ,
\end{equation}
as required.
\end{proof}

\begin{lemma}
For all $x_i \in X$, we have $[c, x_1] [x_2, x_3, x_4, x_5] \in I^{(n)} $.
\end{lemma}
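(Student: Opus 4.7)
The plan is to reduce the lemma to the already-established congruence from the proof of $(\ref{generatorTns[xxx]})$. Specifically, I will first prove the auxiliary identity
\[
[c, u, v][x_2, x_3, x_4] \equiv 0 \pmod{I^{(n)}} \qquad (u, v, x_i \in X),
\]
and then use a short commutator manipulation to deduce $[c, x_1, x_5][x_2, x_3, x_4] \equiv -[c, x_1][x_2, x_3, x_4, x_5] \pmod{I^{(n)}}$, combining the two to conclude.

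To prove the auxiliary identity I apply Lemma \ref{lemma[cz][zzz]=0} and Corollary \ref{corollary3[sz][zzz]} with $c$ replaced by $c' := [c, u]$, $B = A/I^{(n)}$, and $Z = \{x + I^{(n)} : x \in X\}$. Since $c'$ has length $n-2$ rather than $n-3$, the hypothesis commutators $[c', z_1, z_2, z_3]$, $[c', z_1, z_2 z_3, z_4]$, and $[c', z_1 z_2, z_3, z_4]$ have length $n+1$. I will verify that each lies in $I^{(n)}$ by chopping off the last bracket: the resulting length-$n$ commutator either lies directly in $S^{(n)}$ (when the new final entry is in $X$) or reduces to a sum of such after a single Leibniz expansion (when the final entry is a product of two elements of $X$); the outermost bracket is then absorbed by $[I^{(n)}, A] \subseteq I^{(n)}$. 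Lemma \ref{lemma[cz][zzz]=0} yields
\[
[c, u, z_1][z_2, z_3, z_4] + [c, u, z_2][z_1, z_3, z_4] \equiv 0 \pmod{I^{(n)}} ,
\]
Corollary \ref{corollary3[sz][zzz]} upgrades this to $3[c, u, z_1][z_2, z_3, z_4] \equiv 0 \pmod{I^{(n)}}$, and dividing by $3$ (invertible in $R$) gives the auxiliary identity.

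For the main reduction, I expand $[c, x_1, x_5] = [c, x_1] x_5 - x_5 [c, x_1]$ and use $x_5 [x_2, x_3, x_4] = [x_2, x_3, x_4] x_5 - [x_2, x_3, x_4, x_5]$. The two resulting terms that contain $[c, x_1][x_2, x_3, x_4]$ drop out modulo $I^{(n)}$ by $(\ref{generatorTns[xxx]})$, leaving
\[
[c, x_1, x_5][x_2, x_3, x_4] \equiv -[c, x_1][x_2, x_3, x_4, x_5] \pmod{I^{(n)}} .
\]
Applying the auxiliary identity with $u = x_1$, $v = x_5$ gives $[c, x_1, x_5][x_2, x_3, x_4] \equiv 0 \pmod{I^{(n)}}$, whence $[c, x_1][x_2, x_3, x_4, x_5] \equiv 0 \pmod{I^{(n)}}$. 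The only delicate point is the verification in the previous paragraph that the hypothesis commutators for $c' = [c, u]$ lie in $I^{(n)}$: these are length $n+1$, one more than the length-$n$ commutators that sufficed for $(\ref{generatorTns[xxx]})$, so the two-sided ideal property of $I^{(n)}$ must be invoked explicitly to absorb the extra bracket.
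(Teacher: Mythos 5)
Your proof is correct. The algebraic decomposition you use,
\[
[c,x_1][x_2,x_3,x_4,x_5] = [c,x_1][x_2,x_3,x_4]\,x_5 - x_5\,[c,x_1][x_2,x_3,x_4] - [c,x_1,x_5][x_2,x_3,x_4],
\]
is exactly the one in the paper (you write it as the equivalent congruence $[c,x_1,x_5][x_2,x_3,x_4] \equiv -[c,x_1][x_2,x_3,x_4,x_5]$), and both arguments dispose of the first two terms by noting that $[c,x_1][x_2,x_3,x_4]$ is an element of the form (\ref{generatorTns[xxx]}), already shown to lie in $I^{(n)}$. Where you diverge is in the treatment of the remaining term $[c,x_1,x_5][x_2,x_3,x_4]$. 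The paper invokes Proposition \ref{proposition3vinI'} with $s$ replaced by $[c,x_1]$: that proposition places $[c,x_1,x_5][x_2,x_3,x_4]$ in the two-sided ideal generated by the elements $[c,x_1,z_1,z_2]$ (which lie in $S^{(n)}$) and $[c,x_1,z_1][z_2,z_3]+[c,x_1,z_2][z_1,z_3]$ (which lie in $I^{(n)}$ by (\ref{[cn-2x][xx]inIn}) applied to $d=[c,x_1]$). You instead re-run the Lemma \ref{lemma[cz][zzz]=0} / Corollary \ref{corollary3[sz][zzz]} machinery for $c'=[c,u]$, which obliges you to verify that the length-$(n+1)$ hypothesis commutators $[c',z_1,z_2,z_3]$, $[c',z_1,z_2z_3,z_4]$, $[c',z_1z_2,z_3,z_4]$ vanish modulo $I^{(n)}$; your verification (peel off the outermost bracket, recognize the inner length-$n$ commutator as an element of $S^{(n)}$ or, when its last entry is in $X^2$, as a Leibniz combination of such, then absorb the outer bracket using that $I^{(n)}$ is a two-sided ideal) is sound, and the hypothesis $\frac{1}{3}\in R$ is used at the same spot in both arguments. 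Your route is a little longer but self-contained at this point of the text, while the paper's is shorter because Proposition \ref{proposition3vinI'} has already packaged precisely this reduction.
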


\begin{proof}
We have
\begin{align*}
& [c, x_1] [x_2, x_3, x_4, x_5] = [c, x_1] [x_2, x_3, x_4] x_5 - [c, x_1] x_5 [x_2, x_3, x_4] 
\\
= \ &  [c, x_1] [x_2, x_3, x_4] x_5 - x_5 [c, x_1] [x_2, x_3, x_4] - [c, x_1, x_5] [x_2, x_3, x_4].
\end{align*}
Note that $ [c, x_1] [x_2, x_3, x_4]$ is an element of the form (\ref{generatorTns[xxx]});  it follows that $ [c, x_1] [x_2, x_3, x_4] \in I^{(n)}$, therefore 
\[
[c, x_1] [x_2, x_3, x_4] x_5 - x_5 [c, x_1] [x_2, x_3, x_4] \in I^{(n)}.
\]
On the other hand,  by Proposition \ref{proposition3vinI'}, the element $[c, x_1, x_5] [x_2, x_3, x_4] $  belong to the two-sided ideal of $A$  generated by the elements of the forms $[c, x_1, z_1, z_2]$ and $[c, x_1, z_1] [z_2, z_3] + [c, x_1, z_2][z_1, z_3]$ $(z_i \in X)$. Since the latter elements belong to $I^{(n)}$, we have
\[
[c, x_1, x_5] [x_2, x_3, x_4] \in I^{(n)} .
\]
It follows that $[c, x_1] [x_2, x_3, x_4, x_5] \in I^{(n)}$, as required.
\end{proof}

Since  $\bigl[ [x_2, x_3], [x_4, x_5] \bigr] = [x_2, x_3, x_4, x_5] - [x_2, x_3, x_5, x_4]$, we have 

\begin{corollary}
\label{corollary[cx][[xx][xx]]}
For all $x_i \in X$, we have $[c, x_1] \bigl[ [x_2, x_3], [x_4, x_5] \bigr] \in I^{(n)}$.
\end{corollary}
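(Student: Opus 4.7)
The plan is to observe that the corollary follows almost immediately from the preceding lemma combined with the Jacobi-type identity $\bigl[ [x_2, x_3], [x_4, x_5] \bigr] = [x_2, x_3, x_4, x_5] - [x_2, x_3, x_5, x_4]$, which is already highlighted right before the statement.

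First, I would expand the inner commutator using the identity $\bigl[ [x_2, x_3], [x_4, x_5] \bigr] = [x_2, x_3, x_4, x_5] - [x_2, x_3, x_5, x_4]$, which is just the definition of the bracket of $[x_2,x_3]$ with $[x_4,x_5]$ unwound into two left-normed length-$4$ commutators. Multiplying on the left by $[c, x_1]$ gives
\[
[c, x_1] \bigl[ [x_2, x_3], [x_4, x_5] \bigr] = [c, x_1] [x_2, x_3, x_4, x_5] - [c, x_1] [x_2, x_3, x_5, x_4].
\]

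Next, I would apply the previous lemma (which states $[c, x_1][x_2, x_3, x_4, x_5] \in I^{(n)}$ for all $x_i \in X$) to each of the two terms on the right-hand side: the first term is already in the exact form of that lemma, and the second is obtained by swapping the roles of $x_4$ and $x_5$, so it is also in $I^{(n)}$. Both terms therefore lie in $I^{(n)}$, and hence so does their difference. There is essentially no obstacle here — this is a one-line consequence of the previous lemma, and the only thing to verify is that the identity for $\bigl[ [x_2, x_3], [x_4, x_5] \bigr]$ is correctly a difference of two left-normed commutators of the required shape, which is immediate from the recursive definition of $[\,\cdot\,,\,\cdot\,,\,\cdot\,,\,\cdot\,]$.
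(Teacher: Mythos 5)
Your proposal is correct and is exactly the paper's argument: the paper derives the corollary in one line from the identity $\bigl[ [x_2, x_3], [x_4, x_5] \bigr] = [x_2, x_3, x_4, x_5] - [x_2, x_3, x_5, x_4]$ together with the preceding lemma applied to each of the two left-normed terms. Nothing is missing.
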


\begin{lemma}
\label{lemma[cx]g(1234)}
For all $x, z_i \in X$,
\[
[c, x] g(z_1, z_2, z_3, z_4) \equiv [c, x] g(z_4, z_2, z_3, z_1) \pmod{I^{(n)} } ,
\]
\[
[c, x] g(z_1, z_2, z_3, z_4)  \equiv [c, x] g(z_2, z_1, z_4, z_3) \pmod{I^{(n)} } .
\]
\end{lemma}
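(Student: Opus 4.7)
The plan is to reduce both congruences to applications of Corollary \ref{corollary[cx][[xx][xx]]} by computing $g(z_1,z_2,z_3,z_4) - g(z_4,z_2,z_3,z_1)$ and $g(z_1,z_2,z_3,z_4) - g(z_2,z_1,z_4,z_3)$ explicitly, each as a sum of double commutators $\bigl[[z_i,z_j],[z_k,z_\ell]\bigr]$. These symmetries of $g$ hold up to reordering of factors in a non-commutative product, and that reordering is precisely what Corollary \ref{corollary[cx][[xx][xx]]} handles after multiplication by $[c, x]$ on the left.

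For the first congruence, I would first use antisymmetry of the commutator to write
\[
g(z_4, z_2, z_3, z_1) = [z_4, z_2][z_3, z_1] + [z_4, z_3][z_2, z_1] = [z_2, z_4][z_1, z_3] + [z_3, z_4][z_1, z_2].
\]
Subtracting from $g(z_1, z_2, z_3, z_4) = [z_1, z_2][z_3, z_4] + [z_1, z_3][z_2, z_4]$, the difference collapses to
\[
g(z_1, z_2, z_3, z_4) - g(z_4, z_2, z_3, z_1) = \bigl[[z_1, z_2], [z_3, z_4]\bigr] + \bigl[[z_1, z_3], [z_2, z_4]\bigr],
\]
so $[c, x]$ times each summand lies in $I^{(n)}$ by Corollary \ref{corollary[cx][[xx][xx]]}.

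For the second congruence the same strategy gives
\[
g(z_2, z_1, z_4, z_3) = [z_2, z_1][z_4, z_3] + [z_2, z_4][z_1, z_3] = [z_1, z_2][z_3, z_4] + [z_2, z_4][z_1, z_3],
\]
so
\[
g(z_1, z_2, z_3, z_4) - g(z_2, z_1, z_4, z_3) = [z_1, z_3][z_2, z_4] - [z_2, z_4][z_1, z_3] = \bigl[[z_1, z_3], [z_2, z_4]\bigr],
\]
and one more application of Corollary \ref{corollary[cx][[xx][xx]]} finishes the proof.

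There is no substantial obstacle here: once one recognizes that $g$ itself is manifestly symmetric in $z_2, z_3$ and that the two displayed permutations differ from the identity only by swaps which shuffle the order of the two bracket factors in each monomial, the entire argument reduces to elementary bilinear manipulation plus a single appeal to the already-established Corollary \ref{corollary[cx][[xx][xx]]}. The only care needed is bookkeeping with signs from antisymmetry of the commutator, which collapses the twelve terms of the two expressions into two (respectively one) clean double-commutator terms.
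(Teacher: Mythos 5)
Your proof is correct and follows essentially the same route as the paper: both rewrite the permuted copy of $g$ via antisymmetry of the commutator, identify the difference as a sum of terms $\bigl[[z_i,z_j],[z_k,z_\ell]\bigr]$ (two for the first congruence, one for the second), and conclude by Corollary \ref{corollary[cx][[xx][xx]]}. The sign bookkeeping and the resulting double-commutator expressions match the paper's computation exactly.
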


\begin{proof}
We have
\begin{align*}
& g(z_1, z_2, z_3, z_4) = [z_1, z_2][z_3, z_4] + [z_1, z_3][z_2, z_4]  = [z_4, z_3] [z_2, z_1] + \bigl[ [z_1, z_2], [z_3, z_4] \bigr] + [z_4, z_2] [z_3, z_1]
\\
& +  \bigl[ [z_1, z_3] , [z_2, z_4] \bigr] = g(z_4, z_2, z_3, z_1) + \bigl[ [z_1, z_2], [z_3, z_4] \bigr] + \bigl[ [z_1, z_3] , [z_2, z_4] \bigr]
\end{align*}
so, by Corollary \ref{corollary[cx][[xx][xx]]},
\[
[c, x] g(z_1, z_2, z_3, z_4) \equiv [c, x] g(z_4, z_2, z_3, z_1) \pmod{I^{(n)} } .
\]
On the other hand,
\begin{align*}
& g(z_1, z_2, z_3, z_4) = [z_1, z_2][z_3, z_4] + [z_1, z_3][z_2, z_4]  = [z_2, z_1] [z_4, z_3] 
\\
& + [z_2, z_4] [z_1, z_3] - \bigl[ [z_2, z_4], [z_1, z_3] \bigr] = g(z_2, z_1,z_4, z_3) - \bigl[ [z_2, z_4], [z_1, z_3] \bigr] 
\end{align*}
so, by Corollary \ref{corollary[cx][[xx][xx]]},
\[
[c, x] g(z_1, z_2, z_3, z_4)  \equiv [c, x] g(z_2, z_1, z_4, z_3) \pmod{I^{(n)} } .
\]
This completes the proof of Lemma \ref{lemma[cx]g(1234)}.
\end{proof}

Now we are in a position to check that the elements of the form (\ref{generatorTns[xx]xx]}) belong to $I^{(n)}$. We have 
\begin{align*}
& g( x_2, x_3, x_4, x_5) + g( x_2, x_3, x_5, x_4)  + g(x_2, x_4, x_5, x_3)  = \bigl( [x_2, x_3] [x_4, x_5] + [x_2, x_4] [x_3, x_5] \bigr) 
\\
& + \  \bigl( [x_2, x_3] [x_5, x_4] + [x_2, x_5] [x_3, x_4] \bigr) + \bigl( [x_2, x_4] [x_5, x_3] + [x_2, x_5] [x_4, x_3] \bigr) = 0
\end{align*}
so
\begin{equation}
\label{[cx](g+g+g)=0}
[c, x_1] \bigl( g( x_2, x_3, x_4, x_5) + g( x_2, x_3, x_5, x_4)  + g(x_2, x_4, x_5, x_3) \bigr) = 0.
\end{equation}
On the other hand,  by Lemmas \ref{lemma[c1]g(2345)} and \ref{lemma[cx]g(1234)},
\begin{align*}
& [c, x_1] g(x_2, x_3, x_5, x_4) \equiv [c, x_1] g(x_4, x_3, x_5, x_2) \pmod{I^{(n)} } 
\\
\equiv \ & -  [c, x_4] g(x_1, x_3, x_5, x_2) \pmod{I^{(n)} } \equiv \  - [c, x_4] g(x_3, x_1, x_2, x_5) \pmod{I^{(n)} }
\\
\equiv \  & [c, x_3] g(x_4, x_1, x_2, x_5) \pmod{I^{(n)} }  \equiv \   [c, x_3] g(x_1, x_4, x_5, x_2) \pmod{I^{(n)} } 
 \\
\equiv \ & - [c, x_1] g(x_3, x_4, x_5, x_2) \pmod{I^{(n)} }   \equiv \  - [c, x_1] g(x_2, x_4, x_5, x_3) \pmod{I^{(n)} } ,
\end{align*}
that is, 
\begin{equation}
\label{[cx](g+g)=0}
[c, x_1] \bigl( g( x_2, x_3, x_5, x_4)  + g(x_2, x_4, x_5, x_3) \bigr) \equiv 0 \pmod{I^{(n)} }.
\end{equation}
It follows from (\ref{[cx](g+g+g)=0}) and (\ref{[cx](g+g)=0}) that 
\begin{equation*}
[c, x_1] g( x_2, x_3, x_4, x_5) \equiv 0 \pmod{I^{(n)} }.
\end{equation*}

Recall that each element of the form (\ref{generatorTns[xx]xx]}) can be written as $[c, x_1] g( x_2, x_3, x_4, x_5) $ where $x_i \in X$, $c = [y_1, y_2, \dots , y_{n-3}]$, $y_1 \in X$, $y_2, \dots , y_{n-3}\in X \cup X^2$. Thus, each element of the form (\ref{generatorTns[xx]xx]}) belongs to $I^{(n)}$, as required. 

The proof of Theorem \ref{generatorsTn} is completed.


\end{document}